\newtheorem{thmA}{Theorem}
\newtheorem{theorem}{Theorem}[section]
\newtheorem{lemma}[theorem]{Lemma}
\newtheorem{prop}[theorem]{Proposition}
\newtheorem{proposition}[theorem]{Proposition}
\newtheorem{cor}[theorem]{Corollary}
\theoremstyle{remark}
\newtheorem{remarks}[theorem]{Remarks}
\newtheorem{question}[theorem]{Question}
\theoremstyle{definition}
\def\X{\mathfrak{X}}
\def\<{\langle}
\def\>{\rangle}
\def\-{\overline}
\def\sq{\small{\square}}
\def\ssm{\smallsetminus}
\def\G{\Gamma}
\def\g{\gamma}
\def\a{\alpha}
\def\b{\beta}
\def\e{\varepsilon}
\def\imm{\rm{im}}
\def\cd{{\rm{cd}}}
\numberwithin{equation}{section}
\newcommand{\FP}{{\rm{FP}}}
\newcommand{\R}{\mathbb{R}}
\newcommand{\Z}{\mathbb{Z}}
\newcommand{\ns}{\triangleleft}
\newcommand{\A}{\mathcal{A}}
\newcommand{\M}{\mathcal{M}}
\begin{document}

\title[Weak commutativity and finiteness properties]{Weak commutativity 
and finiteness properties of groups}

\author{Martin R. Bridson}

\address{Mathematical Institute,
University of Oxford, 
Andrew Wiles Building, ROQ,
Oxford OX2 6GG,
United Kingdom} 
\email{bridson@maths.ox.ax.uk}

\author{Dessislava H. Kochloukova}

\address
{Department of Mathematics, State University of Campinas (UNICAMP), 13083-859, Campinas, SP, Brazil} 
\email{desi@ime.unicamp.br}
\thanks{The first author was supported in part by a Wolfson Research Merit Award from the Royal Society.
The second author was supported in part by regular grant 2016/05678-3 from  FAPESP and ''bolsa de produtividade em pesquisa'' 303350/2013-0 CNPq, Brazil}

\subjclass[2000]{Primary 20F05; Secondary 20J05, 20E06}


\keywords{groups, finiteness properties, weak commutativity}

\begin{abstract}  We consider the group $\X(G)$ obtained from $G\ast G$ by forcing
each element $g$ in the first free factor to commute with the copy of $g$ in the second free factor. Deceptively complicated finitely presented groups arise from this construction: $\X(G)$ is finitely presented if and only
if $G$ is finitely presented, but if $F$ is a non-abelian free group of finite rank then $\X(F)$ has a subgroup of finite index whose third homology is not finitely generated.
\end{abstract} 

\maketitle

\section*{Introduction}

In \cite{Said} Sa\"{i}d Sidki defined a functor $\X$ that assigns to a group $G$ the group
$\X(G)$  obtained from $G\ast G$ by forcing
 each element $g$ in the first free factor to commute with the copy of $g$ in the second free factor.
More precisely, taking a second copy $\-{G}$ of $G$ and fixing an isomorphism  $g\mapsto\-{g}$, one
defines $\X(G)$ to be the quotient of the free product $G\ast\-{G}$
by the normal subgroup $\<\!\langle   [g, \-g] :  g \in G \rangle\!\>$. We shall show that this construction is an intriguing new source of finitely presented groups.

At first glance, there seems little reason to believe that $\X(G)$ can be 
defined by finitely many of the given relations, even if $G$ is finitely presented, but this is in fact the case:

\begin{thmA}\label{thmA} 
$\X(G)$ is finitely presented if and only if  $G$ is finitely presented.
\end{thmA}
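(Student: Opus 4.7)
The easy direction is immediate: the homomorphism $\rho\colon\X(G)\to G$ sending both $g$ and $\-g$ to $g$ is well defined, since each defining relator $[g,\-g]$ of $\X(G)$ maps to $[g,g]=1$, and the inclusion of the first free factor $G\hookrightarrow\X(G)$ is a section of $\rho$. Thus $G$ is a retract of $\X(G)$, and since retracts of finitely presented groups are finitely presented, $\X(G)$ finitely presented forces $G$ to be.

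For the converse, fix a finite presentation $G=\<X\mid R\>$. My plan begins with the structural lemma that the kernel $D:=\ker\rho$ is finitely generated as a normal subgroup of $\X(G)$: more precisely, $D$ is the normal closure in $\X(G)$ of the finite set $\{d(x):=x\-{x}^{-1}:x\in X\}$. The proof is a splitting trick. Let $N_0$ denote this normal closure and set $Q:=\X(G)/N_0$; then $\-x=x$ in $Q$ for each $x\in X$, so the two homomorphisms $G\to Q$ coming from the two free factors of $G\ast\-G$ agree on $X$, hence on all of $G$, forcing $\-g=g$ in $Q$ for every $g\in G$. Thus $Q$ is generated by the image of the first factor $G\hookrightarrow\X(G)$, and the factorisation of $\rho$ through $Q$, composed with this inclusion, is the identity on $G$; so $Q\cong G$ and $N_0=D$.

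The main obstacle is to convert this normal-generation statement into an actual finite presentation of $\X(G)$, since a short exact sequence $1\to K\to E\to Q\to 1$ with $Q$ finitely presented and $K$ finitely generated as a normal subgroup does not, in general, force $E$ to be finitely presented. My plan is to exploit the crossed-homomorphism identity $d(uv)={}^u d(v)\cdot d(u)$, or equivalently the commutator identity
\[
[uv,\-u\-v]\;=\;{}^u[v,\-u]\cdot{}^{u\-u}[v,\-v]\cdot[u,\-u]\cdot{}^{\-u}[u,\-v]
\]
valid in $G\ast\-G$, to show that $[g,\-g]=1$ propagates from generators $g\in X$ to arbitrary $g\in G$ using only finitely many extra ``cross-term'' relators (one for each pair $(x,y)\in X\times X$) together with $R$ and $\-R$. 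Concretely, modulo $[x,\-x]=[y,\-y]=1$ the relation $[xy,\-{xy}]=1$ becomes an identity relating ${}^x[y,\-x]$ to ${}^{\-x}[x,\-y]$; taking these finitely many two-variable identities as defining relators supports an induction on the length of $g$ as a word in $X$, with applications of relators from $R$ absorbed into the induction step via $R\cup\-R$. Executing this commutator-calculus induction in $G\ast\-G$ is the heart of the argument; once it is carried out, the finite list $R\cup\-R\cup\{[x,\-x]:x\in X\}\cup\{\text{cross-terms}\}$ presents $\X(G)$.
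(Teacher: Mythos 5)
The easy direction of your argument is correct, and your observation that the kernel of $\X(G)\to G$ (which you call $D$; in Sidki's notation this subgroup is $L$, while $D$ denotes $[G,\-G]$) is the normal closure of the finitely many elements $x\-x^{-1}$ is also correct --- though, as you yourself note, normal finite generation of the kernel does not bear on finite presentability.

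The converse contains a genuine gap, located exactly where you write that ``executing this commutator-calculus induction \dots is the heart of the argument'': the induction you describe does not close. Suppose inductively that $[u,\-u]=[v,\-v]=1$ and you wish to deduce $[uv,\-u\-v]=1$ from your identity. What remains is a cross-term built from conjugates of $[v,\-u]$ and $[u,\-v]$, where $u$ and $v$ are now arbitrary words, not generators. But the relation identifying $[\-u^{-1},v]$ with $[\-v,u^{-1}]^{-1}$ is, modulo $[u,\-u]=[v,\-v]=1$, itself equivalent to $[vu,\-v\-u]=1$ --- an instance of the very family of relations you are trying to establish, for a word of the same length as $uv$. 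So the two-factor decomposition $g=uv$ merely trades $\sq_{uv}$ for $\sq_{vu}$, and the finitely many cross-terms indexed by pairs of \emph{generators} give no purchase on pairs of longer words. This is not a presentational quibble: the finite presentation one actually obtains for $\X(F_m)$ requires relators $[w,\-w]$ for words $w$ of length up to $m+1$ (twelve relators for $F_2$, including $aba$, $bab$, $aba^{-1}$, etc.), and their number grows exponentially with $m$; a relator set built only from single generators and pairs is not known to suffice, and nothing in your sketch shows that it does. The working argument uses a three-factor decomposition $w=\b\e\a$ with $\a,\b$ single letters: a planar-diagram identity shows that, modulo the relators for shorter words, $\sq_{\b\e\a}=1$ is equivalent to $[[\-\a^{-1},\b],\,\e\-\e^{-1}]=1$, whence the middle subword $\e$ may be inverted and its letters permuted. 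This lets one normalise $\e$ to a square-free monomial $a_{i_1}\cdots a_{i_s}$ with $i_1<\dots<i_s$, of length at most $m-1$, and the induction on $|w|$ terminates only because there are finitely many such monomials. Without a mechanism of this kind for reducing a cross-term of long words to one of bounded complexity, your induction cannot be completed.
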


We shall derive an explicit finite presentation for $\X(G)$
from a finite presentation for $G$; see Theorem \ref{t:present}. For a free group of rank $2$, 
$$
\X(F_2) = \< a,b,\-a,\-b\mid [w,\-w],\ \ w\in\Upsilon_2\>,
$$
where $\Upsilon_2=\{a,\, b,\, ab,\, ab^{-1},\, a^{-1}b,\, ba,\, aba,\, a^{-1}ba,\, aba^{-1},\, bab,\, b^{-1}ab,\, bab^{-1}\}$. 
For  free groups of higher rank, the number of
commutators in our presentation of $\X(F_m)$ grows exponentially with $m$.

It would be quite wrong to interpret Theorem \ref{thmA} as saying that $\X(G)$ is a less
complicated group than one first imagines.  
Even for a group as easily understood as a free group $F$, it transpires
that $\X(F)$ is an exotic creature. For example, $\X(F)$ does not have a classifying space
with only finitely many $3$-cells. Recall that a group $G$ is of type $\FP_n$ if the trivial $\mathbb{Z} G$-module $\mathbb{Z}$ has a projective resolution that is finitely generated up to dimension $n$. For more details on groups of type $\FP_n$ the reader is refered to \cite{B-book}.

\begin{thmA}\label{thmB}
 If $F$ is a non-abelian free group, then $\X(F)$ is not of type $\FP_3$.
\end{thmA}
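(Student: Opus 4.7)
The plan is to exhibit a finite-index subgroup $Y$ of $\X(F)$ whose third integral homology is not finitely generated; this suffices because $\FP_3$ is inherited by and between finite-index subgroups.

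The natural first move is to consider the short exact sequence
$$ 1 \longrightarrow W \longrightarrow \X(F) \xrightarrow{\pi_1\times\pi_2} F\times F \longrightarrow 1, $$
where $\pi_i$ is the canonical projection of $\X(F)$ onto the $i$-th free factor (well defined because $\pi_i([g,\bar g])=1$ in each case). I would analyse the abelianisation $M := W^{\mathrm{ab}}$ as a $\Z[F\times F]$-module. Using the finite presentation of $\X(F)$ supplied by Theorem \ref{t:present}, $M$ is generated by the finite set of classes $[x_i,\bar x_j]$; however the relation submodule of $M$ is generated (as a $\Z[F\times F]$-module) by the linearisations of $[g,\bar g]=1$ as $g$ ranges over \emph{all} of $F$, and one shows that no finite subfamily suffices, so $M$ fails to be finitely presented as a $\Z[F\times F]$-module. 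The free structure of $F$ is essential here, and the failure shows up as infinite generation of a suitable $\mathrm{Tor}^{\Z[F\times F]}_\ast(\Z,M)$.

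Combining this with the Lyndon--Hochschild--Serre spectral sequence
$$ E^2_{p,q} = H_p(F\times F;\, H_q(W)) \Longrightarrow H_{p+q}(\X(F)), $$
and using $\cd(F\times F)=2$ to restrict to $p\in\{0,1,2\}$, I would locate an infinite-generation class in $E^2_{2,1}=H_2(F\times F; M)$ and argue that it persists to $E^\infty$ after controlling the outgoing differential $d_2 : E^2_{2,1}\to E^2_{0,2}$; alternatively one can work with $E^2_{1,2}=H_1(F\times F; H_2(W))$, which receives no non-trivial differentials in the relevant range. Where necessary I would pass to a finite-index subgroup $Y\leq\X(F)$ chosen so that the $F\times F$-action on the witnessing classes becomes trivial, so that $H_3(Y;\Z)$ visibly contains the infinite family and thus fails to be finitely generated.

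The main obstacle is the module-theoretic step: giving a sufficiently explicit description of the relation submodule of $M$ to extract a concrete infinite family of classes and verify that neither the spectral sequence differentials nor the residual $F\times F$-action can collapse them. That this obstacle is intrinsic is witnessed by Sidki's theorem that $\X(G)$ is polycyclic (hence of type $\FP_\infty$) whenever $G$ is polycyclic, so any proof of Theorem \ref{thmB} must exploit the non-polycyclic, exponentially growing nature of the free group in an essential way.
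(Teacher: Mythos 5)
There is a genuine gap, and it sits exactly where you place it: the ``module-theoretic step'' that you defer is not a technical obstacle to be cleaned up later, it is the entire content of the theorem. You assert that $M$ fails to be finitely presented over $\Z[F\times F]$ and that ``one shows that no finite subfamily suffices,'' but no mechanism is offered for proving this, nor for showing that the resulting classes survive to $E^\infty$ and remain infinitely generated after passing to a finite-index subgroup. Note that the paper is explicitly careful on this point (Remark \ref{remark}(1)): its argument never establishes that $W(F)$, or any homology of $W(F)$ or $L(F)$, is infinitely generated, precisely because such direct computations are out of reach. There is also an error in your setup: the kernel of $\pi_1\times\pi_2:\X(F)\to F\times\-F$ is $D=[F,\-F]$, not $W$; by definition $W=D\cap L$ is the kernel of the larger map $\rho:\X(F)\to F\times F\times F$. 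Since $D$ need not be abelian while $W$ is central in $DL$, this substitution changes the coefficient modules in every $E^2$ term you propose to examine.

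The paper's route is quite different and avoids any direct computation of $W$ or $D$. It argues by contradiction, assuming $H_3(\G,\Z)$ is finitely generated for every finite-index $\G\le\X(F)$, and derives two incompatible conclusions about $W$. First, the image of $\rho$ is a finitely presented full subdirect product of infinite index in $F\times F\times F$, so by \cite[Thm.~C]{BHMS1} it has a finite-index subgroup $\Delta$ with $H_3(\Delta,\Z)$ not finitely generated; an LHS argument for $W\to\rho^{-1}(\Delta)\to\Delta$ then shows $W$ cannot be finitely generated. Second, Marshall Hall's theorem yields a finite-index subgroup $\G=L\rtimes F_0$ that splits as an HNN extension $B\ast_L$ with stable letter centralising $L$; a Mayer--Vietoris argument forces $H_2(L,\Z)$ to be finitely generated, and the LHS spectral sequence of the central extension $1\to W\to L\to\rho(L)\to 1$ (with $\cd(\rho(L))\le 2$) then forces $W$ to be finitely generated. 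The contradiction completes the proof. The input you are missing is the homological finiteness theory of subdirect products of free groups, which is what actually manufactures the non-finitely-generated $H_3$; your proposed direct attack on the relation module would require exactly the kind of explicit knowledge of $W$ that the authors deliberately circumvent.
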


What we shall actually prove is that $\X(F)$ has a subgroup of finite index whose third
homology $H_3(-,\Z)$ is not finitely generated -- see Theorem \ref{t:B'}.

The study of higher finiteness properties of groups began with the work
of Serre \cite{serre} and Wall \cite{wall} in the 1960s, and it has remained
a rich and active area of research. The first example of a finitely presented
group that is not of type $\FP_3$ was constructed by Stallings \cite{stallings} in 1963.
Many groups of this sort are now known, but the functorial nature 
of the construction in Theorem \ref{thmB} is striking.

We also prove a homological version of Theorem A.
In the light of Theorem \ref{thmB}, one
cannot hope to extend this theorem beyond dimension 2.

\begin{thmA} 
 $\X(G)$ has homological type $\FP_2$ if and only if $G$ has type $\FP_2$.
\end{thmA}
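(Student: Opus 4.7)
The plan is to prove each direction separately: a retraction handles $(\Rightarrow)$, while the converse reduces to Theorem A via the classical characterisation of $\FP_2$ as ``finitely presented modulo a perfect normal subgroup.''

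For $(\Rightarrow)$, I would check that $G$ is a retract of $\X(G)$. The homomorphism $\rho\colon \X(G)\to G$ sending $g\mapsto g$ and $\-g\mapsto 1$ is well defined, since every defining commutator $[g,\-g]$ is killed, and it splits the natural embedding $G\hookrightarrow G\ast\-G\twoheadrightarrow\X(G)$. The class of $\FP_n$ groups is closed under retracts---a standard fact, verifiable by pushing the natural maps $H_i(G,V)\to H_i(\X(G),\rho^*V)\to H_i(G,V)$ through the product-preservation characterisation of $\FP_n$---so if $\X(G)$ is of type $\FP_2$ then so is $G$.

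For $(\Leftarrow)$, I would invoke the following characterisation: $G$ is of type $\FP_2$ if and only if there is a short exact sequence $1\to P\to H\to G\to 1$ with $H$ finitely presented and $P$ a perfect normal subgroup of $H$. This follows directly from the module-theoretic definition of $\FP_2$. Writing $G=F/N$ with $F$ finitely generated free and choosing $r_1,\dots,r_k\in N$ whose classes generate $N^{ab}$ as a $\Z G$-module, the subgroup $M:=\langle\!\langle r_1,\dots,r_k\rangle\!\rangle^F$ satisfies $N=M\cdot[N,N]$; hence $H:=F/M$ is finitely presented and $P:=N/M$ is perfect and normal in $H$, while the converse is a similar direct calculation. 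Now, given $G$ of type $\FP_2$ and writing $G=H/P$ as above, applying the functor $\X$ produces (by Theorem A) a finitely presented group $\X(H)$ together with a surjection $\X(H)\twoheadrightarrow\X(G)$ whose kernel $K$ is the normal closure in $\X(H)$ of the two canonical copies $P$ and $\-P$ of $P$. Since any conjugate of a perfect group is perfect and the subgroup generated by any family of perfect subgroups is perfect, $K$ is itself perfect, and the characterisation applied in reverse to $\X(G)=\X(H)/K$ delivers that $\X(G)$ is of type $\FP_2$.

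The only step which uses the specific structure of $\X$ is the identification of $K$, namely the assertion $\X(H)/\langle\!\langle P\cup\-P\rangle\!\rangle\cong\X(G)$. I would verify this by direct manipulation of presentations: quotienting $\X(H)=(H\ast\-H)/\langle\!\langle [h,\-h]:h\in H\rangle\!\rangle$ by $\langle\!\langle P\cup\-P\rangle\!\rangle$ yields $(G\ast\-G)/\langle\!\langle [h,\-h]:h\in H\rangle\!\rangle$, and since $H\twoheadrightarrow G$ is surjective every $g\in G$ lifts to some $h\in H$, so the images of the commutators $[h,\-h]$ exhaust $\{[g,\-g]:g\in G\}$ and the final quotient is exactly $\X(G)$. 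The main anticipated obstacle is not any individual step but the conceptual one of recognising that the perfect-normal-subgroup characterisation of $\FP_2$ is precisely what allows Theorem A to be bootstrapped via functoriality.
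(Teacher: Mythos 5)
Your proposal is correct and follows essentially the same route as the paper: the forward direction via the fact that $G$ is a retract of $\X(G)$, and the converse via the characterisation of $\FP_2$ groups as quotients of finitely presented groups by perfect normal subgroups (the paper's Lemma \ref{l:FP2}), combined with Theorem A and the observation that the normal closure of the two copies of the perfect kernel in $\X(H)$ is again perfect. The only cosmetic difference is your choice of retraction ($\-g\mapsto 1$ rather than $\-g\mapsto g$), which is immaterial.
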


In contrast to Theorem \ref{thmB},
Kochloukova and Sidki \cite{KochSidki} showed that if $G$ is a soluble group of type $\FP_{\infty}$ then $\X(G)$ is also soluble group of type $\FP_{\infty}$.  
The functor $\X$ preserves many other interesting classes of groups.
For example, Sidki
\cite[Thm.~C]{Said} showed that if $G$ lies in
any of the following classes, then $\X(G)$ lies in the same class: finite $\pi$-groups, where $\pi$ is a set of primes; finite nilpotent groups; solvable groups.
Gupta, Rocco and Sidki  \cite{G-R-S} proved that the class of finitely generated nilpotent groups is closed
under $\X$, and 
Lima and Oliveira \cite{L-O} proved the same for polycyclic-by-finite groups. 
Kochloukova and Sidki \cite{KochSidki} also
 proved a forerunner of Theorem A: if $G$ is finitely presented and $G' / G''$ is finitely generated then $\X(G)$ is finitely presented. 
 \smallskip

Our original proof of Theorem A relied on the basic structural
results for $\X(G)$ established by Sidki in his seminal paper \cite{Said}, as well
as the VSP criterion for finite presentability established
by Bridson, Howie, Miller and Short \cite[Thm.~A]{BHMS2}, and 
a finiteness result concerning the abelianisation of a special subgroup
$L(G)<\X(G)$ that was established by Lima and Oliveira \cite{L-O}. A shortcoming of this original proof
was that it did not give an estimate on the number of relations needed to present $\X(G)$; it was in seeking to address this that
we discovered the proof presented in Section 1. In this approach, a decomposition
scheme for planar diagrams leads to an inductive proof that all of the relations
$[w,\-w]$ in the definition of $\X(G)$ follow from a finite set of a particular form.
We believe that this technique is of interest in its own right and expect it
to find further applications.

In a subsequent paper \cite{BK2} we shall explore the isoperimetric properties of $\X(G)$
and prove, among other things,
 that the class of virtually nilpotent groups is closed under $\X$. 

This paper is organised as follows. In Section 1 we present our diagrammatic
proof that $\X(F)$ is finitely presented if $F$ is a finitely generated free
group; the proof yields an explicit finite presentation. Theorem A follows
easily from the special case $G=F$.
In Section 2 we recall  some of Sidki's
basic structural results for $\X(G)$ and its normal subgroups $L=L(G), D=D(G)$ and
$W=L\cap D$, and we relate Theorem A to these structures. 
In Section 3 we prove Theorem C. 
In Section 4 we
prove Theorem B. The proof
relies on an analysis of the LHS spectral sequences
associated to the maps $\X(F)\to \X(F)/W$ and $L\to L/W$; results concerning the
homology of subdirect products of free groups (\`a la \cite{BHMS1}) play a key role in this
analysis. 

\smallskip
\noindent{\em{Acknowledgement:}} We thank the anonymous referee for their comments and for generously converting our hand-drawn pictures into the diagrams in their current form.

\section{The finite presentability of $\X(G)$}

Let $F$ be the free group with basis $\{a_1,\dots,a_m,\-a_1,\dots,\-a_m\}$.
Let $\M$ be the free monoid (set of
finite words) with basis
 the $4m$-letter alphabet consisting of the symbols $a_i, \-a_i$ and 
formal inverses $a_i^{- 1},\-a_i^{- 1}$.  
Let $F(\A)$ (resp. $\M(\A)$) be the subgroup (resp. submonoid) of $F$ (resp. $\M$)
generated by $\A=\{a_1^{\pm 1},\dots,a_m^{\pm 1}\}$.
We shall
reserve the term {\em{word}} for elements of $\M$ (rather than $F$) and write $|w|$
to denote the length of a word. We continue to write $w$ when considering the
image of $w\in\M
$ in $F$ and its quotients.

Given a subset $S\subset F$, we adopt the standard terminology$\mod S$ when describing properties of the group $F/N_S$, where
$N_S$ is the normal subgroup generated by $S$. Thus, for example,
$w=1 \mod S$ means that the image of $w$ in $F/N_S$ is trivial. We write $\-w$ for the word obtained from $w\in \M(\A)$
by replacing each $a_i^{\pm 1}$ with $\-a_i^{\pm 1}$

We shall be particularly concerned with words of the form
$$\sq_w:=w^{-1}\-w^{-1}w\-w$$
for $w\in \M(\A)$. In $F$ we have $\sq_w=[w,\-w]$. Define
$$\sq(\!(n)\!)=\{[w,\-w^{-1}] \mid |w|\le n\}.$$

Theorem \ref{thmA} is equivalent to the assertion that there exists an $n_0$
such that
$\sq_u=1 \mod \sq(\!(n_0)\!)$ for all $u\in \M(\A)$. Thus our objective is to find equations in $F$ that exhibit the triviality of $\sq_w$ in $\X(F)$ as a consequence of shorter relations.

\subsection{Identities from diagrams}

The use of planar diagrams to explore relations in groups is well established. Most applications can be interpreted as forms of van Kampen's Lemma, and this applies to the following simple observation.

\begin{lemma}\label{l0} Let $X$ be a finite, connected, planar graph with edges
labelled by elements of the free group $F$. Let $r_1,\dots,r_m$ be the elements labelling
the boundary cycles of the compact connected components of $\R^2\ssm X$ and let $r_0$ be the label on the boundary cycle of the infinite component. (These labels can be read from any point on the cycle, proceeding with either orientation.) For all
$i,j\in\{r_0,\dots,r_m\}$, let 
$R_{ij}=\{r_k \mid k\neq i,j\}$. Then, 
$$
r_i= 1 \mod R_{ij} \ \iff \ r_j=1 \mod R_{ij}.
$$
\end{lemma}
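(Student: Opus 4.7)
The plan is to reduce the lemma to a standard $\pi_1$-computation on the sphere $S^2 = \R^2 \cup \{\infty\}$. Viewing $X$ as a $1$-complex embedded in $S^2$, the compact components of $\R^2 \setminus X$ together with the image of the unbounded component become the (now all compact) faces $F_0, F_1, \dots, F_m$ of $X$ in $S^2$, with boundary labels $r_0, r_1, \dots, r_m$. Write $\langle\!\langle S \rangle\!\rangle$ for the normal closure in $F$ of a subset $S \subset F$.

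The key reduction is the following \emph{auxiliary claim}: for each index $i$, the element $r_i \in F$ lies in $\langle\!\langle \{r_k : k \neq i\} \rangle\!\rangle$. Granting this, the lemma is immediate: if $r_i \in \langle\!\langle R_{ij} \rangle\!\rangle$, then $r_j \in \langle\!\langle R_{ij} \cup \{r_i\} \rangle\!\rangle = \langle\!\langle R_{ij} \rangle\!\rangle$, and the converse is symmetric.

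To prove the auxiliary claim, I would fix $i$, attach a topological $2$-cell to $X$ along the boundary cycle of $F_k$ for every $k \neq i$, and call the resulting $2$-complex $Y_i$. Then $Y_i$ is homeomorphic to $S^2$ with one open $2$-disk removed, hence contractible, so $\pi_1(Y_i)=1$. By the standard presentation of $\pi_1$ of a $2$-complex, the free group $\pi_1(X)$ is normally generated by the homotopy classes of the attached boundaries, read from a fixed basepoint in $X$ along chosen paths; in particular the boundary loop of the remaining face $F_i$ lies in this normal closure. The edge-labelling of $X$ induces a homomorphism $\pi_1(X) \to F$ sending the boundary loop of $F_k$ to a conjugate of $r_k$, and this homomorphism carries normal closures into normal closures, yielding the claim.

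The only delicate point—and the place one must be careful—is that each cycle label $r_k$ is defined only up to cyclic permutation and orientation, whereas the corresponding element of $\pi_1(X)$ depends on a choice of path from the basepoint to the boundary of $F_k$. Different path choices produce conjugate elements and the opposite orientation produces the inverse, but membership in a normal closure is invariant under both conjugation and inversion, so all choices yield the same statement in $F$. Neither $2$-connectedness of $X$ nor simple-connectedness of the faces is needed; the argument applies to any finite connected planar graph.
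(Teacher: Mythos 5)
Your argument is correct, and it is the same topological idea the paper has in mind (both are ``forms of van Kampen's Lemma''), but you organise it differently. The paper attaches discs to the one-vertex rose along the loops labelled by $R_{ij}$, obtaining the presentation complex of $F/\langle\!\langle R_{ij}\rangle\!\rangle$, and then uses the planar diagram to exhibit a free homotopy between the loops labelled $r_i$ and $r_j^{\pm 1}$ in that complex; the symmetric equivalence is read off directly. You instead attach the discs to $X$ itself, prove the sharper one-face statement that each $r_i$ lies in $\langle\!\langle \{r_k : k\neq i\}\rangle\!\rangle$, and then deduce the two-face equivalence by a purely formal normal-closure manipulation. Your route is arguably cleaner as a standalone proof of the stated equivalence, and your closing observation about conjugacy/inversion ambiguities is exactly the right point of care. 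What it does not deliver, and what the paper actually needs downstream, is the explicit free-group identity witnessing the dependence (the paper extracts equations (1.1) and (1.2) from the two specific diagrams by the algebraic version of the argument); for that one must follow the combinatorial proof of van Kampen's lemma rather than the purely topological one. One small imprecision: $Y_i$ need not be \emph{homeomorphic} to $S^2$ minus an open disc, since the boundary cycle of a face may traverse an edge twice (a spur) and the attaching maps of your $2$-cells are then not embeddings, nor need $F_i$ itself be an open disc. The correct assertion is that $Y_i$ is simply connected (indeed contractible), which is the standard content of van Kampen's lemma and is proved by induction on the number of edges (collapsing free edges); with that substitution your proof is complete.
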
 

A topological explanation for this fact is the following: consider the  1-vertex graph with directed edges
labelled by the elements of a fixed generating set for $F$ and attach $m-1$
discs to this along the loops labelled $r_k\in R_{ij}$. The fundamental group
of the resulting 2-complex is $F/\<\!\<R_{ij}\>\!\>$ and the loop
labelled $r_i$ is freely homotopic to that labelled $r_j^{\pm 1}$ (with the
sign determined by the original choices of orientation on the boundary loops for $X$). 

An algebraic proof can be obtained by following the standard proof of van Kampen's lemma.
We illustrate this with the two special cases that are central to our proof of Theorem A.

\begin{figure}[ht] 
\includegraphics{bk1i.mps}\qquad\includegraphics{bk1ii.mps}
 \caption{diagram with boundary $\sq_{\b\e\a}=1$ }
 \vspace{-0.13cm}
 \label{fig1}
 \end{figure}

In this diagram $\a, \e,\b$ are arbitrary words
and all segments in the interior are labelled with the words labelling the 
segments of the boundary that are vertical or horizontal translates of them. We shall
read all labels by proceeding clockwise around boundary faces.  

We perform algebraic manipulations {\em{in the free group}}, guided by reading the outer boundary
of the decomposition shown in fig 1(ii).
The validity of these manipulations can be verified\footnote{discovering the decomposition algebraically is harder than
verifying it!} by freely reducing the righthand side of equation (\ref{e1}); the four bracketed factors 
correspond to the four  faces of the diagram. 

Our notational conventions are $[x,y]:=x^{-1}y^{-1}xy$ and $x^y:= y^{-1}xy$.

\begin{equation}\label{e1}
[\b\e\a, \-\b\-\e\-\a] = [\a,\-\a]\ ([\-\a^{-1},\e]\, [\a^{-1},\-\e]^{-1})^{\a\-\a}\
([\-\a^{-1},\,\b]^{\e\-\e^{-1}}\, [\-\b,\,\a^{-1}])^{\a\-\e\-\a} \
[(\-\b\-\e)^{-1},\, \b\e]^{\a\-\b\-\e\-\a}.
\end{equation}

\begin{figure}[ht] 
\includegraphics{bk2i.mps}\qquad\includegraphics{bk2ii.mps}
 \caption{$[\overline{u}^{-1},\,v]\, [\overline{v},\,u^{-1}] = 1 $}
 \vspace{-0.13cm}
 \label{fig2}
\end{figure}

When $\e$ is the empty word, figure 1 simplifies to figure 2 and equation (\ref{e1})
reduces to:

\begin{equation}\label{e:zig}
[\b\a, \-\b\-\a] = [\a,\-\a]\  
([\-\a^{-1},\,\b]\, [\-\b,\,\a^{-1}])^{\a\-\a} \
[\-\b^{-1},\, \b]^{\a\-\b\-\a}.
\end{equation}

\smallskip

\begin{lemma}\label{l:2.1} For all non-empty words $\a,\e,\b\in \M(\A)$ with 
$|\a| + |\e| + |\b| = n+1$,
$$
\sq_{\b\e\a} = 1  \mod \sq(\!(n)\!)  \iff [[\-\a^{-1},\,\b],\, \e\-\e^{-1}]=1 \mod \sq(\!(n)\!).
$$
\end{lemma}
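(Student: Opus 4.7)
The plan is to apply identity (1.1) and to show that three of its four factors already vanish $\mod\sq(\!(n)\!)$, leaving the third factor as the only contribution to $\sq_{\b\e\a}$, which is then easily recognized as the desired iterated commutator.  The hypothesis $|\a|+|\e|+|\b|=n+1$ with all three words nonempty gives $|\a|,|\b|,|\e|\le n-1$ and $|\a|+|\e|,|\b|+|\e|,|\a|+|\b|\le n$, so every commutator $[w,\-w]$ appearing below with $|w|\le n$ lies in the normal closure of $\sq(\!(n)\!)$ (the normal closures of $\{[w,\-w]\}$ and $\{[w,\-w^{-1}]\}$ coincide by the standard identity $[x,y^{-1}]=([x,y]^{y^{-1}})^{-1}$).

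The first factor, $[\a,\-\a]=\sq_\a$, is in $\sq(\!(n)\!)$ since $|\a|\le n$, and the fourth factor $[(\-\b\-\e)^{-1},\b\e]^{\a\-\b\-\e\-\a}$ is a conjugate of $[\-w^{-1},w]^{\pm1}$ for $w=\b\e$ with $|w|\le n$, so also in $\sq(\!(n)\!)$.  For the second factor, rewrite $[\a^{-1},\-\e]^{-1}=[\-\e,\a^{-1}]$ and apply equation (1.2) with $(\b,\a)$ replaced by $(\e,\a)$:
$$\sq_{\e\a}=[\a,\-\a]\,\big([\-\a^{-1},\e]\,[\-\e,\a^{-1}]\big)^{\a\-\a}\,[\-\e^{-1},\e]^{\a\-\e\-\a}.$$
This expresses the bracketed product as a product of (conjugates of) $\sq_{\e\a}^{\pm1}$, $\sq_\a^{\pm1}$ and $\sq_{\e^{-1}}^{\pm1}$ (using that $[\-\e^{-1},\e]$ is a conjugate of $\sq_{\e^{-1}}$).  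The lengths $|\e\a|,|\a|,|\e^{-1}|=|\e|$ are all $\le n$, so the second factor of (1.1) is trivial $\mod \sq(\!(n)\!)$.

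Now apply equation (1.2) in its original form: since $|\b|,|\a|,|\b\a|\le n$, we get $[\-\a^{-1},\b][\-\b,\a^{-1}]\equiv 1 \mod\sq(\!(n)\!)$, i.e.\ $[\-\b,\a^{-1}]\equiv [\-\a^{-1},\b]^{-1}$.  Substituting this into the third factor of (1.1) shows that, modulo $\sq(\!(n)\!)$,
$$\sq_{\b\e\a}\ \equiv\ \big([\-\a^{-1},\b]^{\e\-\e^{-1}}\,[\-\a^{-1},\b]^{-1}\big)^{\a\-\e\-\a}.$$
A one-line commutator calculation gives $X^Y X^{-1}=[X,Y]^{X^{-1}}$, applied with $X=[\-\a^{-1},\b]$ and $Y=\e\-\e^{-1}$; hence $\sq_{\b\e\a}$ is, modulo $\sq(\!(n)\!)$, a conjugate in $F$ of $[[\-\a^{-1},\b],\e\-\e^{-1}]$.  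Since the relevant set is normal in $F$, the equivalence in the lemma follows.

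The main obstacle is purely bookkeeping: keeping track of the various inner conjugations and sign conventions so that each bracketed sub-expression really does match a $\sq_w$ (or its inverse, or a conjugate thereof) with $|w|\le n$.  Once the length inequalities are verified, the argument is entirely formal manipulation in $F$.
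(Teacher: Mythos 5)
Your proposal is correct and follows essentially the same route as the paper: both apply identity (1.1), use the consequence of (1.2) that $[\-u^{-1},v]\,[\-v,u^{-1}]=1 \mod \sq(\!(n)\!)$ for $|u|+|v|\le n$ to kill every factor except the third, and then recognise the surviving factor as a conjugate of $[[\-\a^{-1},\b],\e\-\e^{-1}]$. Your write-up merely makes explicit the length bookkeeping and conjugacy identifications that the paper leaves implicit.
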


\begin{proof} Equation (\ref{e:zig}) shows that for all
words $u,v\in \M(\A)$ with $|u|+|v|\le n$ we have 
\begin{equation}\label{e:star}
[\-u^{-1},\,v]\, [\-v,\,u^{-1}] = 1 \mod \sq(\!(n)\!).
\end{equation}
Thus,  mod $ \sq(\!(n)\!)$,
each factor on the
right hand side of (\ref{e1}) is trivial, except possibly the third, and for that  we
have,
$$
[\-\a^{-1},\,\b]^{\e\-\e^{-1}}\, [\-\b,\,\a^{-1}] =1
\iff 
[\-\a^{-1},\,\b]^{\e\-\e^{-1}} = [\-\b,\,\a^{-1}]^{-1} = [\-\a^{-1},\,\b],
$$
where the last equality is an application of (\ref{e:star}).
\end{proof}

\smallskip

\begin{lemma}\label{l2} For all non-empty words $\a,\e,\b\in \M(\A)$ with 
$|\a| + |\e| + |\b| = n+1$,
$$
\sq_{\b\e\a} = 1   \mod \sq(\!(n)\!) \iff \sq_{\b\e^{-1}\a} = 1   \mod \sq(\!(n)\!).
$$
\end{lemma}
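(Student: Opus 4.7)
The plan is to apply Lemma~\ref{l:2.1} to both sides of the desired equivalence and then observe that, modulo $\sq(\!(n)\!)$, the words $\e$ and $\-\e$ commute, so that $\e\-\e^{-1}$ and $\e^{-1}\-\e$ become mutual inverses in the quotient.

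Concretely, I would set $c := [\-\a^{-1},\,\b]$. Lemma~\ref{l:2.1} directly gives
$$
\sq_{\b\e\a}=1 \mod \sq(\!(n)\!) \iff [c,\, \e\-\e^{-1}] = 1 \mod \sq(\!(n)\!).
$$
Applying the same lemma with $\e$ replaced by $\e^{-1}$ (which leaves the total length $n+1$ unchanged and, since $\-{\e^{-1}} = \-\e^{-1}$, turns $\e\-\e^{-1}$ into $\e^{-1}\-\e$) gives
$$
\sq_{\b\e^{-1}\a}=1 \mod \sq(\!(n)\!) \iff [c,\, \e^{-1}\-\e] = 1 \mod \sq(\!(n)\!).
$$
It therefore suffices to show that $[c,\, \e\-\e^{-1}] = 1 \mod \sq(\!(n)\!)$ iff $[c,\, \e^{-1}\-\e] = 1 \mod \sq(\!(n)\!)$.

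The only real content beyond Lemma~\ref{l:2.1} is a length bound: because $\a$ and $\b$ are non-empty, $|\e|\le n-1$, so $\sq_\e = [\e,\-\e]$ is a consequence of $\sq(\!(n)\!)$. Modulo $\sq(\!(n)\!)$ the elements $\e$ and $\-\e$ therefore commute, and in the abelian subgroup they generate one has $\e^{-1}\-\e = (\e\-\e^{-1})^{-1}$. Since $[c,h]=1 \iff [c,h^{-1}]=1$ in any group, the two commutator conditions are equivalent modulo $\sq(\!(n)\!)$, and the lemma follows. I do not anticipate any serious obstacle: the argument is essentially a length accounting together with the elementary fact about inverses of commutator arguments, and should occupy only a handful of lines once Lemma~\ref{l:2.1} is available.
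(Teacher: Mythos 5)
Your proof is correct and follows essentially the same route as the paper: both apply Lemma~\ref{l:2.1} to $\sq_{\b\e\a}$ and $\sq_{\b\e^{-1}\a}$ and then use the fact that, modulo $\sq(\!(n)\!)$ (the paper phrases this via the identity $\-\e\e^{-1}=\e^{-1}\-\e\,[\-\e,\e^{-1}]$ and the bound $|\e|\le n-1$), the elements $\e\-\e^{-1}$ and $\e^{-1}\-\e$ are mutually inverse, so the two commutator conditions coincide.
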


\begin{proof} The second of the equivalences in the following chain is justified
because the identity ${\-\e\e^{-1}} = {\e^{-1}\-\e} [\-\e,\e^{-1}]$ implies
$(\e\-\e^{-1})^{-1} = {\e^{-1}\-\e} \mod\sq(\!(n-1)\!)$. The other equivalences are instances
of Lemma \ref{l:2.1}. All calculations are mod $ \sq(\!(n)\!)$.
$$
\sq_{\b\e\a} = 1  \iff [[\-\a^{-1},\,\b],\, \e\-\e^{-1}]=1 \iff
[[\-\a^{-1},\,\b],\, \e^{-1}\-\e]=1 \iff
\sq_{\b\e^{-1}\a}= 1.
$$
\end{proof}
 
\subsection{A reduction scheme}

We maintain the notation established at the beginning of this section; in particular $F$ is the free group of rank $m$.
The following proposition shows that in order to present $\X(F)$ one only needs to impose the relations $\sq_u$ with
$|u|\le m+1$.

\begin{prop}\label{p:G=F}
For all $w\in  \M(\A)$, if $|w|\ge m+2$ then $\sq_w=1 \mod
\sq(\!(m+1)\!)$.
\end{prop}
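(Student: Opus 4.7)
I would proceed by strong induction on $|w|$. For the inductive step with $|w|=n+1\ge m+2$, the induction hypothesis gives $\sq_u=1\mod\sq(\!(m+1)\!)$ for every $u$ with $m+2\le |u|\le n$, so the normal closures of $\sq(\!(n)\!)$ and of $\sq(\!(m+1)\!)$ in $F$ coincide. It therefore suffices to prove $\sq_w=1\mod\sq(\!(n)\!)$, and by Lemma~\ref{l:2.1} this in turn reduces to finding a factorisation $w=\b\e\a$ with $\a,\e,\b$ all non-empty for which
$$[[\-\a^{-1},\,\b],\,\e\-\e^{-1}]=1\mod\sq(\!(n)\!).$$

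Lemma~\ref{l2} supplies the additional freedom to flip $\e$ to $\e^{-1}$ at will, which I expect to be essential. The input that must be exploited is the length bound $|w|\ge m+2$: since $\A$ is built from only $m$ underlying generators $a_1,\dots,a_m$, any substring of length $m+1$ contains a repeated generator by pigeonhole. My plan is to use this repetition to position a short block inside $w$ around which to cut: the factorisation $w=\b\e\a$ should be chosen so that $\a$ and $\b$ stand in a distinguished relationship (say, involving letters with the same underlying generator) that makes the inner commutator $[\-\a^{-1},\b]$ rewrite — via the auxiliary identity (\ref{e:star}) and the defining relations $\sq_u$ with $|u|\le m+1$ — into a product of elements of the normal closure of $\sq(\!(m+1)\!)$. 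A clean special case is $\a=\b$, where $[\-\a^{-1},\a]$ is equivalent to $\sq_\a$ and is therefore trivial modulo $\sq(\!(m+1)\!)$ by the induction hypothesis; one aim is to reduce the general situation to this or a similar manageable configuration.

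Once $[\-\a^{-1},\b]$ has been placed in the normal closure of $\sq(\!(m+1)\!)$, the triple commutator $[[\-\a^{-1},\b],\,\e\-\e^{-1}]$ vanishes modulo that normal closure, because conjugation preserves it. The main obstacle, to my mind, is twofold: first, showing that for \emph{every} $w$ of length $\ge m+2$ a factorisation of the required form exists — this is a combinatorial lemma about words of length exceeding $m+1$, presumably dispatched by a case analysis keyed on the location of the first repeated underlying generator; second, executing the algebraic reduction of $[\-\a^{-1},\b]$ (or of the full triple commutator) using only identities in which every invoked $\sq_u$ has length at most $n$, so the induction hypothesis can be cited without circularity. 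I expect the bookkeeping in this second step to be the delicate part: each application of (\ref{e:star}) or of Lemmas~\ref{l:2.1}/\ref{l2} must be checked to consume only ``budget'' that the induction has already paid for.
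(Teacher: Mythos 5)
Your setup coincides with the paper's: induct on $|w|$, note that under the inductive hypothesis the normal closures of $\sq(\!(n)\!)$ and $\sq(\!(m+1)\!)$ agree so that it suffices to prove $\sq_w=1 \mod \sq(\!(n)\!)$ for $n=|w|-1$, and feed everything through Lemmas \ref{l:2.1} and \ref{l2}. But the combinatorial core --- precisely the part you flag as ``the main obstacle'' --- is the whole content of the proposition, and the mechanism you propose for it does not work. You want a single factorisation $w=\b\e\a$ in which $\a$ and $\b$ stand in a ``distinguished relationship'' forcing $[\-\a^{-1},\b]$ into the normal closure of $\sq(\!(m+1)\!)$. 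The only such relationship you exhibit is $\a=\b$, which requires $w$ to begin and end with the same subword and fails for most words; and the weaker condition that $\a$ and $\b$ merely share an underlying generator is not sufficient: for instance $[\-\a^{-1},\b]$ with $\a=ba$, $\b=ab$ amounts to $\-b\-a$ commuting with $ab$, which is not a consequence of the defining relators in any evident way (it is exactly the kind of element that generates the nontrivial subgroup $D$). The pigeonhole repetition guaranteed by $|w|\ge m+2$ typically sits in the interior of $w$ and does not by itself produce a usable prefix/suffix pair, so ``case analysis on the first repeated generator'' has no clear path to a cut of the required form.

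What the paper actually does is not ``find one good cut where $[\-\a^{-1},\b]$ dies'' but ``iterate Lemma \ref{l2} until $w$ itself becomes short''. Write $w=\b_0w_0\a_0$ with $|\a_0|=|\b_0|=1$. Applying Lemma \ref{l2} with $\e$ a two-letter interior block replaces $xy$ by $y^{-1}x^{-1}$ inside $w_0$, and with $\e$ a single letter it inverts that letter; composing such moves permutes and re-signs the interior letters at will, preserving the truth value of $\sq_w=1\mod\sq(\!(n)\!)$. One then sorts $w_0$ into a positive square-free monomial $a_{i_1}\cdots a_{i_s}$ (length $\le m$ by pigeonhole --- this is where the hypothesis $|w|\ge m+2$ enters), and cancels $\a_0$ and $\b_0$ against matching interior letters, leaving a word of total length $\le m+1$ whose relator already lies in $\sq(\!(m+1)\!)\subseteq\sq(\!(n)\!)$. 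Your worry about ``budget'' is legitimate but resolves itself in this scheme, since every application of Lemma \ref{l2} is to a factorisation of a word of length exactly $n+1$ and therefore only invokes relators of length $\le n$. Without this rewriting procedure, or a genuine substitute for it, what you have is a correct reduction to an unproved combinatorial claim rather than a proof.
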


\begin{proof} It is enough to prove that $\sq_w=1 \mod
\sq(\!(|w|-1)\!)$ if $|w|\ge m+2$.

Consider a word $w=\b_0 w_0 \a_0$ of length $n+1 \geq m+2$ with $\a_0,w_0,\b_0\in  \M(\A)$
and $|\a_0|=|\b_0|=1$. By making repeated use of Lemma \ref{l2}, we can
permute the letters of $w_0$
$$
\ldots ab\ldots \sim \ldots (ab)^{-1}\ldots = \ldots b^{-1}a^{-1}\ldots \sim \ldots ba\ldots,
$$
and thus, after free reduction, transform $w_0$ into a word of the form
$
w_0'= a_{i_1}^{p_1}\dots a_{i_s}^{p_s}
$
with $|w_0'|\le |w_0|$ and  $1\le i_1 < i_2<\dots < i_s\le m$, such that
\begin{equation}\label{e3}
\sq_w = 1  \mod \sq(\!(n)\!) \iff \sq_{\b_0 w_0' \a_0} = 1   \mod \sq(\!(n)\!).
\end{equation}
Using Lemma \ref{l2} again, we can assume $p_i>0$.
And by transforming subwords $aa$ into $aa^{-1}$ and cancelling, we can actually
assume that $w_0'$ is a monomial word,
i.e. each $p_i=1$. When modified in this way, $|w_0'|\le m$. Next,
if $\a_0 = a_k^{\pm 1}$ for some $k\in\{i_1,\dots,i_s\}$, then we can permute
$a_k$ to the end of $w_0'$ and cancel it with $\a_0$
(using Lemma \ref{l2} to replace it by its inverse if necessary). Thus we can
assume that $k\not\in \{i_1,\dots,i_s\}$. Similarly, $\b_0 = a_j^{\pm 1}$
with $j\not\in \{i_1,\dots,i_s\}$ (but it is possible that $a_j=a_k$).
With these final assumptions, we have $|w_0'|\le m-1$, so from (\ref{e3})
we have $\sq_w=1 \mod \sq(\!(n)\!)$, 
provided $n\ge m+1$.
\end{proof}

\subsection{A presentation of $\X(F)$}

First we consider the case where $F=F_2$ is free of rank $2$ with basis $\{a,b\}$. The proof of Proposition \ref{p:G=F} shows that in order to present $\X(F_2)$ it is sufficient to
impose 12 relations of the form $\sq_w = 1$. 
To obtain the list of sufficient relations, 
one begins with all words of length at most 2 and adds to it
those of length 3 obtained by following the proof of 
Proposition \ref{p:G=F} with $w_0'=a$
and $w_0'=b$. The list can then be shortened by noting that $\sq_w=1$ implies $\sq_{w^{-1}}=1$.   
$$
\X(F_2) = \< a,b,\-a,\-b\mid [w,\-w]\ \text{ if } w\in\Upsilon_2\>,
$$
where $\Upsilon_2=\{a,\, b,\, ab,\, ab^{-1},\,a^{-1}b,\,ba,\,
aba,\, a^{-1}ba,\, aba^{-1},\, bab,\, b^{-1}ab,\, bab^{-1}\}$.  

Consider now the case where $F=F_3$ is free of rank $3$ with basis $\{a,b,c\}$.
In this case, we include 3 words
of length 1 and 4 words
of length 2 for each pair of distinct basis elements. To these 15
 words we add
further relations accounting for each of the possibilities for $w_0'$ in the
proof of Proposition \ref{p:G=F} (in its final,
positive, square-free form). The possibilities are
$$
a,\ b,\ c,\ ab,\ bc,\ ac.
$$
In each case we have to account for the different initial and terminal letters $\a_0$
and $\b_0$ (with signs). If $w_0'=a$, there are 10 possibilities (after removing
redundancies such as $bac \sim (bac)^{-1}\sim c^{-1}ab^{-1}$ by appealing to Lemma \ref{l2}):
$$
bab,\, bab^{-1},\, b^{-1}ab,\, bac,\, bac^{-1},\ b^{-1}ac,\, cab,\, cac,\, cac^{-1},\,
c^{-1}ac.
$$ Likewise we add 10 words for $w_0'=b$ and 10 for $w_0'=c$.

For $w_0'=ab$, the relations that we add are $\sq_w$ with $w \in \{cabc,\, cabc^{-1},\,  c^{-1}abc\}$. And similarly for $w_0'\in\{bc,ac\}$. 

Thus we obtain a presentation of $\X(F_3)$ with generators $\{a,b,c,\-a,\-b,\-c\}$
subject to $54$ relations of the form $[w,\-w]=1$, with $|w|\le 4$.

In the general case, where $F_m$ is free of rank $m$, say with a basis $ a_1, \ldots, a_m$, our algorithm yields $m$
words of length $1$ together with $2m(m-1)$ words of length $2$ and, for
each $s<m$ and each of the  $\binom{m}{s}$ monomials $w_0 = a_{i_1}\dots a_{i_s}$ with
$1\le i_1<\dots<i_s\le m$, an additional $3(m-s)$ relations of the form $a_k w_0 a_k, a_k w_0 a_k^{-1}, a_k^{-1} w_0 a_k$ where $k \in \{1, \ldots, m \} \setminus \{ i_1, \ldots, i_s \}$ and $4\binom{m-s}{2}$ relations of the form $a_k w_0 a_j, a_k w_0 a_j^{-1}, a_k^{-1} w_0 a_j, a_j w_0 a_k$ where $k \not=j$ and $k,j \in \{1, \ldots, m \} \setminus \{ i_1, \ldots, i_s \}$. Thus $\X(F_m)$
has a presentation with $2m$ generators and 
\begin{equation} \label{v-m}
v_m = m + 2m(m-1) + \sum_{s=1}^{m-1} \binom{m}{s} \left[3(m-s) + 4\binom{m-s}{2}\right]
\end{equation}
relations of the form $[w,\-w]=1$, with $|w|\le m+1$. Note that as $m$
increases, this number of relations grows exponentially.

A somewhat
less economical presentation is given by the following distillation of the proof of Proposition \ref{p:G=F}.

\subsection{A finite presentation of $\X(G)$}

\begin{theorem}\label{t:present}
Let $G = \<a_1,\dots,a_m \mid r_1,\dots,r_n\>$ be a finitely presented group. Then
$$
\X(G) = \< a_1,\dots,a_m, \-a_1,\dots,\-a_m \mid r_1,\dots,r_n, \-r_1,\dots,\-r_n,  [w,\-w]\ \text{ if } w\in\Upsilon_m\>,
$$
where $\Upsilon_m$ consists of the elements of
$\{a_i, a_ia_j, a_ia_j^{-1}, a_i^{-1} a_j, a_ja_i : 1\le i < j \le n\}$ together with words of the
form $w=\b_0w_0\a_0$ where
$w_0$ is a monomial word $a_{i_1}\dots a_{i_s}$ with $1\le i_1<\dots<i_s\le m$,
and $\a_0=a_j^{\pm 1}$ and $\b_0=a_k^{\pm 1}$ with $j,k\not\in\{i_1,\dots,i_s\}.$
\end{theorem}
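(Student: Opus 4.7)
The plan is to reduce to the case of a free group and then use functoriality of $\X$. Specifically, I will first show that when $F = F_m$ is free of rank $m$, the set of relations $\{[w,\-w] : w \in \Upsilon_m\}$ (together with no relations on the $a_i$) gives a presentation of $\X(F_m)$. Then for general $G = F_m/\<\!\<r_1,\dots,r_n\>\!\>$, I will appeal to the fact that $\X$ is a functor to deduce the presentation of $\X(G)$ by adding $r_i$ and $\-r_i$ to the list of relations.

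For the free case, the starting point is Proposition~\ref{p:G=F}, which shows inductively that $\sq_w = 1 \mod \sq(\!(m+1)\!)$ for every word $w$ of length $\ge m+2$; so the relations of length $\le m+1$ suffice. To identify which words of length $\le m+1$ are actually needed, I would retrace the reduction scheme used in the proof of Proposition~\ref{p:G=F}: starting with a general $w = \b_0 w_0 \a_0$ of length $\le m+1$, Lemma~\ref{l2} lets us permute letters of $w_0$, free reduction and the trick $aa \sim aa^{-1}$ let us make $w_0$ a monomial $a_{i_1}\cdots a_{i_s}$ with strictly increasing indices, and the cancellation step at the end ensures the initial and terminal letters $\a_0, \b_0$ are $a_j^{\pm 1}$, $a_k^{\pm 1}$ with $j,k \notin \{i_1,\dots,i_s\}$. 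The ``seed'' length-$1$ and length-$2$ words feeding into these reductions contribute the elements $\{a_i, a_ia_j, a_ia_j^{-1}, a_i^{-1}a_j, a_ja_i\}$ of $\Upsilon_m$. Thus every $\sq_w$ follows from $\{\sq_u : u \in \Upsilon_m\}$, establishing the theorem for $G = F_m$.

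For general $G$, I unfold the definitions. By definition,
\[
\X(G) = (G \ast \-G)/\<\!\<[g,\-g] : g \in G\>\!\> = (F_m \ast \-{F_m})/\<\!\<r_i, \-r_i, [w,\-w] : w \in F_m\>\!\>,
\]
where the second equality uses that every $g\in G$ lifts to some $w\in F_m$. Since $[w,\-w] = 1$ already holds in $\X(F_m)$ for all $w\in F_m$, we obtain the canonical isomorphism $\X(G) \cong \X(F_m)/\<\!\<r_i, \-r_i\>\!\>$. Combining this with the presentation of $\X(F_m)$ established in the first step yields exactly the presentation claimed in the theorem.

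The only real substance is in the first paragraph; the passage from free to general $G$ is formal. The main thing to be careful about is bookkeeping in the reduction step: one must verify that the normalisations performed in Proposition~\ref{p:G=F} (permutations, free reductions, end-letter cancellation) never produce a word outside $\Upsilon_m$ that is not already handled by a shorter element of $\Upsilon_m$, so that the induction really closes on this set. The $F_2$ and $F_3$ computations displayed before the theorem make this bookkeeping transparent in low rank, and the general case is a direct combinatorial extension.
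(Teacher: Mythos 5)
Your proposal is correct and follows the paper's own route: the paper likewise observes that $\X(G)$ is the quotient of $\X(F_m)$ by the normal closure of the $r_i$ and $\-r_i$, and extracts the presentation of $\X(F_m)$ by retracing the reduction scheme in the proof of Proposition~\ref{p:G=F} (the bookkeeping you flag is exactly what the paper carries out explicitly for $F_2$ and $F_3$ and in the general count $v_m$). No gaps.
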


\begin{proof} It follows immediately from the definition that $\X(G)$ 
is the quotient of $\X(F_m)$ by the normal subgroup generated by the $r_i$ and $\-r_i$,
and the presentation of $\X(F_m)$ comes from the proof of Proposition \ref{p:G=F}.
\end{proof}

\section{On the structure of the group ${\X}(G)$} \label{prelimX(G)}

Following Sidki \cite{Said}, we highlight some subgroups and decompositions of $\X(G)$.
We follow the notations from \cite{Said} except that we write $\-G$ and $\-g$
where Sidki writes $G^\psi$ and $g^\psi$.

By means of careful commutator calculations, Sidki proves
that the following subgroups of $\X(G)$ are normal \cite[Lemma~4.1.3]{Said}:
$$
D = D(G) := [G,\-{G}];\ \ L = L(G) :=  \langle  g^{-1} \-{g} \mid g \in G  \rangle .$$
It follows that $D$ is the kernel of the natural map $\X(G)\to G\times\-{G}$ and that
$L$ is the kernel of the map $\X(G)\to G$ that sends both $g$ and $\-{g}$ to $g$. This last map has an obvious splitting, giving
$$
\X(G) = L\rtimes G.
$$
By combining the maps with kernel $D$ and $L$, we obtain a homomorphism
$$
\rho: \X(G) \to G\times \-{G} \times G \cong G\times G\times G$$
with 
$$
\rho(g) = (g,g,1), \ \ \rho(\-g) = (1,g,g) \hbox{ for all } g \in G.
$$
The kernel of $\rho$ is
$$\ W = W(G) := D \cap L.$$
By means of further commutator calculations, Sidki  \cite[Lemma~4.1.6~(ii)]{Said} showed that $D$ commutes with
$L$ and therefore $W$  is central in $DL$ (cf. Lemma \ref{l:2.1} above); in particular, $W$ is abelian. 

The image of $\rho$ is
$$
 Q_G:={\imm ( \rho)} = \{ (g_1, g_2, g_3) \mid g_1 g_2^{-1} g_3 \in [{G}, G] \}.$$
Note that $\imm ( \rho)$ contains the commutator subgroup of $G\times G \times G$;
 $\imm ( \rho)$ is normal in  $G\times G \times G$ with quotient $H_1(G, \mathbb{Z})$. Moreover, its projection onto each pair of coordinates is onto, so by the Virtual Surjection to Pairs Theorem \cite{BHMS2}, if
$G$ is finitely presented then $\imm ( \rho)$ is finitely presented. Thus, when $G$ is finitely presented we have an exact sequence 
$$
1\to W\to \X(G)\to Q_G\to 1
$$
with $Q_G$ finitely presented and $W$ abelian.

The group $\rho(L)$ is normal in $G\times 1\times G\cong G\times G$; the intersection
with each direct factor is the commutator subgroup and the quotient is $H_1(G, \mathbb{Z})$.
If $G$ is finitely generated then $\rho(L)$ is finitely generated (see \cite{BM}, for example), but if $G=F$ is a non-abelian free group then $H_2(\rho(L),\Z)$ is not finitely generated. Thus, in the case $G=F$, we have a central extension
\begin{equation}\label{L:ses}
1\to W \to L \to \rho(L)\to 1,
\end{equation} 
where $\rho(L)$ is finitely generated but not finitely presented.

\subsection{Finite generation for $L$}

We noted above that $\X(G)= L\rtimes G$. So if $\X(G)$ is finitely generated then $G$, as a retract, is also finitely generated. 
Our initial attempts to prove Theorem A focused on 
a partial converse to this observation, the relevance of which is explained 
by the following lemma.

\begin{lemma} \label{Lcriterion} Let $F$ be a finitely generated free group.  If $\X(F)$ is finitely presented then $L = L(F)$ is finitely generated.
\end{lemma}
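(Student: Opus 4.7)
The strategy is to combine the standard short-exact-sequence result on finitely generated normal subgroups with the structural facts about $\X(F)$ recalled in this section. The standard fact (see, e.g., \cite{B-book}) is that in a short exact sequence $1\to N\to G\to Q\to 1$ with $G$ and $Q$ both finitely presented, $N$ is finitely generated as a normal subgroup of $G$. Assuming $\X(F)$ is finitely presented, the plan is to apply this to two sequences.

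Applied to $1\to L\to\X(F)\to F\to 1$—with $F$ finitely presented because it is free of finite rank—this yields $s_1,\dots,s_k\in L$ whose $\X(F)$-normal closure is $L$. Applied to $1\to W\to \X(F)\to Q_F\to 1$, with $Q_F=\rho(\X(F))$ finitely presented by the Virtual Surjection to Pairs Theorem of \cite{BHMS2}, it yields $w_1,\dots,w_r\in W$ whose $\X(F)$-normal closure is $W$. Because $W$ is central in $L$ by \cite{Said}, for any $h=\ell f\in \X(F)=L\rtimes F$ and $w\in W$ we have $h w h^{-1}=\ell(fwf^{-1})\ell^{-1}=fwf^{-1}$, so the $\X(F)$-conjugation action on $W$ factors through $\X(F)/L\cong F$. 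Thus the second conclusion translates to: $W$ is finitely generated as a $\Z[F]$-module.

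Combining this with the central extension
$$
1\to W\to L\to \rho(L)\to 1
$$
and the finite generation of $\rho(L)$ already recorded, finite generation of $L$ reduces to finite generation of $W$ as an abelian group: lifting a finite generating set of $\rho(L)$ to elements of $L$ and appending any finite abelian generating set of $W$ gives a finite generating set for $L$, since $W$ is central (hence normal) in $L$. The principal obstacle is therefore the final step—promoting ``$W$ finitely generated as a $\Z[F]$-module'' to ``$W$ finitely generated as an abelian group''—which is not automatic because $\Z[F]$ is non-Noetherian when $F$ has rank $\ge 2$. I would overcome this by exploiting the specific structure $W=D\cap L$: since $D$ is abelian and is centralized by $L$, further commutator identities from \cite{Said} should show that the $F$-conjugation action on $W$ is tame enough (for example, that the $F$-orbit of each of the finitely many module generators $w_j$ lies in a finitely generated $\Z$-submodule of $W$) for finite module generation to imply finite abelian-group generation, completing the proof.
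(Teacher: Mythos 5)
Your opening moves are fine: since $\X(F)$ is finitely presented and both $F$ and $Q_F=\rho(\X(F))$ are finitely presented, $L$ is indeed the normal closure of finitely many elements and $W$ is finitely generated as a normal subgroup; and because $W\le D$ commutes with $L$, the conjugation action on $W$ does factor through $\X(F)/L\cong F$, so $W$ is a finitely generated $\Z F$-module. The fatal problem is the final step, which you yourself identify as the ``principal obstacle'' and then only gesture at: you propose to upgrade ``$W$ is a finitely generated $\Z F$-module'' to ``$W$ is a finitely generated abelian group'' via unspecified ``further commutator identities''. No such upgrade exists: the authors state in Section 5, citing \cite{BK3}, that $W(F)$ is \emph{not} finitely generated as an abelian group when $F$ has rank at least $3$. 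So the intermediate goal your reduction targets is false in general, and the argument cannot be completed along these lines. The correct weakening, which the paper uses elsewhere (Lemma \ref{l:ses}), is that for the central extension $1\to W\to L\to\rho(L)\to 1$ one needs only $\rho(L)$ and $H_1(L,\Z)$ to be finitely generated --- not $W$ --- and finite generation of $H_1(L,\Z)$ comes from the Lima--Oliveira computation \cite{L-O}, not from any finiteness property of $W$.

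The paper's own proof is entirely different and avoids $W$ altogether: it chooses $g_0\in D\setminus L$, applies Marshall Hall's theorem to find a finite-index $F_0<F$ in which $\pi(g_0)$ is primitive, and exhibits the finite-index subgroup $L\rtimes F_0$ of $\X(F)$ as an HNN extension with stable letter $g_0$ (which commutes with $L$), finitely presented base $L\rtimes F_1$ (a retract of a finitely presented group), and associated subgroup $L$; Miller's lemma \cite[Lemma~2.1]{Miller} then forces the associated subgroup $L$ to be finitely generated. If you wish to keep the flavour of your central-extension reduction, replace the demand that $W$ be finitely generated with the criterion of Lemma \ref{l:ses}; as written, your proof has a genuine gap that cannot be closed.
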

\begin{proof} Let $\pi : \X(F)=L\rtimes F \to F$ be the retraction with kernel $L$
and recall that $[L,D] = 1$.
For a fixed $g_0 \in D \setminus L$ consider $\pi(g_0) \in F$. Marshall Hall's theorem \cite{Hall} provides a subgroup  $F_0<F$ of finite index such that $\pi(g_0)$ is a primitive element of $F_0$. Consider $G_0 = \pi^{-1} (F_0) = L \rtimes F_0$.
As a subgroup of finite index in $\X(F)$, this is finitely presented. By choosing a 
different section of $\pi$ if necessary, we may assume that $g_0\in \tilde F_0 = 1\times F_0$.
Since $\pi(g_0)$ is primitive in $F_0$, we know that $g_0$ is primitive in 
$\tilde F_0$, so $\tilde F_0 = K * F_1$ where $K$ is the cyclic subgroup of $\tilde F_0$ generated by $g_0$. Thus $G_0$ decomposes as an HNN extension with stable
letter $g_0$ and relative
presentation
$$
G_0 = \langle L \rtimes F_1, g_0 \mid [L, g_0] = 1 \rangle.
$$
Crucially, $G_0$ retracts
onto $L \rtimes F_1$ by the map that sends $g_0$ to $1$. Hence $L \rtimes F_1$ is finitely presented.
By a result of Miller \cite[Lemma~2.1]{Miller}, if an HNN extension is finitely presented and the base group is finitely presented, then the associated subgroup must be finitely generated. Thus $L$ is finitely  generated. 
\end{proof}

With Theorem A in hand, this lemma tells us that $L$ is finitely generated,
but one can prove this more directly using the following lemma.

\begin{lemma}\label{l:ses} Let $\G$ be a group and suppose that $C<\G$ is central. 
If $\G/C$ and $H_1(\G,\Z)$ are finitely generated, then $\G$ is finitely
generated.
\end{lemma}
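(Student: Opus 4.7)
The plan is to build up a finite generating set for $\Gamma$ in two stages: first capture enough elements to surject onto $\Gamma/C$, then correct by finitely many elements to cover $H_1(\Gamma,\Z)$.

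First I would choose lifts $g_1,\dots,g_n\in\Gamma$ of a finite generating set of $\Gamma/C$ and let $H=\langle g_1,\dots,g_n\rangle$. By construction $HC=\Gamma$. Because $C$ is central, conjugation by any element $hc$ ($h\in H,\,c\in C$) acts on $H$ as conjugation by $h$, so $H$ is normal in $\Gamma$ and the quotient $\Gamma/H=HC/H\cong C/(H\cap C)$ is abelian.

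Since $\Gamma/H$ is abelian, the projection $\Gamma\to\Gamma/H$ factors through the abelianisation, so $\Gamma/H$ is a quotient of $H_1(\Gamma,\Z)$, and hence finitely generated by hypothesis. Choose $\gamma_1,\dots,\gamma_k\in\Gamma$ whose images generate $\Gamma/H$; then $\{g_1,\dots,g_n,\gamma_1,\dots,\gamma_k\}$ generates $\Gamma$, as any element of $\Gamma$ can be written as a product of a word in the $\gamma_i^{\pm1}$ times an element of $H$.

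There is no real obstacle here — the only subtlety is verifying that $H$ is normal, which uses centrality of $C$ in an essential way; once this is in place, the rest is a routine two-step extension argument using the fact that a group which is an extension of a finitely generated group by a finitely generated group is itself finitely generated.
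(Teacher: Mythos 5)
Your proof is correct and follows essentially the same route as the paper's: lift generators of $\G/C$ to get a subgroup $\Sigma$ with $\Sigma C=\G$, use centrality of $C$ to see that $\Sigma$ is normal with abelian quotient, and then observe that $\G/\Sigma$ is a quotient of $H_1(\G,\Z)$ and hence finitely generated. No issues.
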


\begin{proof} Let $S\subset \G$ be a finite set whose image generates $\G/C$
and let $\Sigma<\G$ be the subgroup generated by $S$. Each $\g\in\Gamma$
can be written in the form $\gamma=z\sigma$ with $z\in C$ and $\sigma\in\Sigma$.
For all $x\in\Sigma$ we have $x^\gamma = x^\sigma\in\Sigma$. Thus $\Sigma$
is normal in $\G$. The central subgroup $C<\G$ maps onto $\G/\Sigma$, which is
therefore abelian. So if the images of $c_1,\dots,c_n\in\G$
generate $H_1(\G,\Z)$, then $S\cup\{c_1,\dots,c_n\}$ generates $\G$.
\end{proof}

We can apply this lemma in the setting of (\ref{L:ses}) because Lima and
Oliveira \cite{L-O} proved that $H_1(L,\Z)$ is finitely generated
whenever $G$ is finitely generated. We recall their
construction. 

They consider the wreath product $H=\Z\wr G$, viewing $\oplus_{g\in G}\Z$
as the additive group of the integral group ring $\Z G$; so $H=\Z G\rtimes G$.
Focusing on the
augmentation ideal $A(G)$ of $\mathbb{Z} G$ and the additive group $I_2(G)$ of the ideal 
of $\Z G$ generated by
$\{ (1-g)^2 \mid g\in G\}$, they prove that the homomorphism 
of groups $\nu:\X(G)\to H/I_2(G)$ defined\footnote{note that
$(0,1)\in H$ is the identity element, not $(1,1)$} by $\nu(g) = (1,g)$ and $\nu(\-g)
= (1,g)^{(1,1)}$ restricts to an epimorphism from $L = L(G)$ to
$A(G)/I_2(G)$ with kernel $[L,L]$. Thus the abelianisation of $L$ is a subgroup
of $M=\Z G/I_2(G)$. Lima and
Oliveira show in \cite{L-O} that if $G$ is generated by $a_1,\dots,a_m$,
then $M$ is generated as an abelian group by $1$ and the finitely many
monomials $a_{i_1}\dots a_{i_s}$ with $1\le i_1<\dots <  i_j\le m$.  

{\em{Thus we see the monomials that played a crucial role in our proof of Theorem A appear from an alternative perspective.}}

Moving beyond the case $G=F$, we record:

\begin{proposition}\label{p:Lfg} For all finitely generated groups $G$, the 
group $L(G)$ is finitely generated.
\end{proposition}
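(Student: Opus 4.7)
The plan is to deduce finite generation of $L=L(G)$ from the central extension
$$
1 \to W \to L \to \rho(L) \to 1,
$$
which was noted for $G=F$ in (\ref{L:ses}) but is valid for arbitrary $G$. Indeed, by definition $W=D\cap L\subseteq L$ is the kernel of $\rho|_L$, so $L/W\cong\rho(L)$, and by Sidki's result that $[D,L]=1$ the subgroup $W\subseteq D$ is central in $L$. With this central extension in hand, I would apply Lemma \ref{l:ses} with $\Gamma=L$ and $C=W$; it then suffices to verify that $L/W$ and $H_1(L,\Z)$ are both finitely generated.

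For $L/W\cong\rho(L)$, I would appeal to the description of $\rho(L)$ given in the excerpt: it sits in $G\times 1\times G$ as a normal subgroup whose intersection with each factor is $[G,G]$ and whose quotient is $H_1(G,\Z)$. Since $G$ is finitely generated, the cited result of Bieri--Strebel (\cite{BM}) — or, concretely, a direct argument using commutators together with generators lifted from $H_1(G,\Z)$ — shows that $\rho(L)$ is finitely generated. This is the step that genuinely requires $G$ to be finitely generated and uses subdirect-product technology; it is the main substantive input, but it is a classical fact.

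For $H_1(L,\Z)$, I would invoke the Lima--Oliveira construction recalled just above the statement: the homomorphism $\nu:\X(G)\to (\Z G/I_2(G))\rtimes G$ restricts on $L$ to a surjection onto $A(G)/I_2(G)$ with kernel $[L,L]$, identifying the abelianisation of $L$ with a subgroup of the finitely generated abelian group $\Z G/I_2(G)$ (generated by $1$ together with the square-free monomials in the chosen generators $a_1,\dots,a_m$). Hence $H_1(L,\Z)$ is finitely generated whenever $G$ is.

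Combining these two inputs via Lemma \ref{l:ses} yields that $L$ itself is finitely generated, completing the proof. The main conceptual obstacle would have been establishing finite generation of $\rho(L)$, but this is handled by the subdirect-product result already in the literature; beyond that, the argument is essentially a bookkeeping exercise combining Sidki's commutator identity $[D,L]=1$, the Lima--Oliveira bound on $H_1(L,\Z)$, and the elementary Lemma \ref{l:ses}.
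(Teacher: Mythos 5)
Your argument is correct, but it is not the route the paper takes for this proposition. The paper's own proof is a one-line functoriality argument: the natural surjection $\X(F_m)\to\X(G)$ (for $F_m$ free on a generating set of $G$) carries $L(F_m)$ onto $L(G)$, because each generator $g^{-1}\-g$ of $L(G)$ is the image of $f^{-1}\-f$ for any preimage $f$ of $g$; since $L(F_m)$ has already been shown to be finitely generated in the preceding paragraphs (by exactly the central-extension argument you describe, applied to the free case), $L(G)$ is too. What you do instead is run that central-extension argument directly for general $G$: the extension $1\to W\to L\to\rho(L)\to 1$ is indeed central for arbitrary $G$ by Sidki's identity $[D,L]=1$, the Lima--Oliveira result gives finite generation of $H_1(L,\Z)$ for any finitely generated $G$ (not just free $G$), and $\rho(L)$ is a finitely generated subdirect product, so Lemma \ref{l:ses} applies. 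Both proofs rest on the same three ingredients; the paper's version isolates them in the free case and then uses the functor, which is slightly more economical, while yours avoids having to observe that $L(F)\to L(G)$ is onto. One small slip: \cite{BM} is Bridson--Miller, not Bieri--Strebel, though the fact you need from it (finite generation of the subdirect product $\rho(L)$, as in Lemma \ref{l:fib}) is correctly identified.
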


\begin{proof} The natural map
$\X(F)\to \X(G)$ sends $L(F)$ onto $L(G)$ and therefore the latter is finitely generated.
\end{proof}

By following the constructions given above, one can give an explicit generating set for $L(G)$. We illustrate this in the 2-generator case, for which we need the following lemma.

\begin{lemma}\label{l:fib}
If $G$ is generated by $\{a,b\}$, then $P= \langle (g,g^{-1}) : g\in G \rangle 
<G\times G$ is generated by $\{(a,a^{-1}),\, (b,b^{-1}),\, (ab, (ab)^{-1})\}$.
\end{lemma}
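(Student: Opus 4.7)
Denote the three proposed generators by $\alpha=(a,a^{-1})$, $\beta=(b,b^{-1})$, $\gamma=(ab,(ab)^{-1})$, and set $P_0=\langle\alpha,\beta,\gamma\rangle$. The plan is to show that $(g,g^{-1})\in P_0$ for every $g\in G$; the reverse inclusion $P_0\subseteq P$ is automatic.

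The starting point is a pair of algebraic identities, verified by direct multiplication in $G\times G$:
\[
\alpha\beta\gamma^{-1}=(1,[a,b]),\qquad \alpha^{-1}\beta^{-1}\gamma=([a,b],1).
\]
Hence both $(1,[a,b])$ and $([a,b],1)$ lie in $P_0$. The point of the third generator $\gamma$ is precisely to compensate for the discrepancy $\alpha\beta=(ab,a^{-1}b^{-1})\neq\gamma=(ab,b^{-1}a^{-1})$ caused by non-commutativity.

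Next I would compute that conjugation in $G\times G$ by $(g,g^{-1})$ sends $(z,1)\mapsto(gzg^{-1},1)$ and $(1,z)\mapsto(1,g^{-1}zg)$. A general element of $P_0$ is a product $(g_1,g_1^{-1})^{\epsilon_1}\cdots(g_n,g_n^{-1})^{\epsilon_n}$ with $g_i\in\{a,b,ab\}$, whose first coordinate $g_1^{\epsilon_1}\cdots g_n^{\epsilon_n}$ can be any element of $G$ (since $\{a,b\}$ already generates $G$). So conjugation by suitable elements of $P_0$ realises the full $G$-action on each axis $G\times\{1\}$ and $\{1\}\times G$. Because $G=\langle a,b\rangle$, the quotient $G/\langle\!\langle[a,b]\rangle\!\rangle^G$ is abelian, so $G'$ equals the normal closure of $[a,b]$ in $G$. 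Combining, both $G'\times\{1\}$ and $\{1\}\times G'$ are contained in $P_0$.

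Finally, for an arbitrary $g\in G$, fix a factorisation $g=g_1g_2\cdots g_n$ with each $g_i\in\{a^{\pm1},b^{\pm1}\}$. The product $(g_1,g_1^{-1})\cdots(g_n,g_n^{-1})\in P_0$ equals $\bigl(g,\,g_1^{-1}g_2^{-1}\cdots g_n^{-1}\bigr)$, and the ratio between this second coordinate and $g^{-1}=g_n^{-1}\cdots g_1^{-1}$ has trivial image in $G^{\mathrm{ab}}$ and hence lies in $G'$. Multiplying by the corresponding element of $\{1\}\times G'\subseteq P_0$ corrects the second coordinate to $g^{-1}$, yielding $(g,g^{-1})\in P_0$ and completing the proof.

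The one step that needs a moment of care is the conjugation argument: one must check that the $P_0$-action on each axis sweeps out the full group $G$ and hence the entire normal closure of $[a,b]$, rather than some proper subgroup. This is precisely where the availability of arbitrary products of $\alpha^{\pm1}$, $\beta^{\pm1}$, $\gamma^{\pm1}$ matters, and is not a genuine obstacle once unpacked.
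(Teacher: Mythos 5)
Your argument is correct and is essentially the paper's proof in slightly more explicit form: both hinge on the identity $([a,b],1)=\alpha^{-1}\beta^{-1}\gamma$ (and its companion $(1,[a,b])$), use conjugation by elements of the subgroup to sweep out $G'\times\{1\}$ and $\{1\}\times G'$ as the normal closure of $[a,b]$, and then correct a word in $\alpha^{\pm1},\beta^{\pm1}$ by an element of $G'$ in one coordinate. The only cosmetic difference is that the paper packages this via the identification of $P$ with the kernel of $G\times G\to H_1(G,\Z)$, whereas you verify $(g,g^{-1})\in P_0$ directly.
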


\begin{proof} $P$ is the kernel of the map $G\times G\to H_1(G,\Z)$
that restricts to the canonical surjection on each factor. It is clear that this
is generated by $\{\alpha=(a,a^{-1}),\, \beta=(b,b^{-1})\}$ and the
normal closure of $c=([a,b],1)$ (with $(1,[b^{-1},a^{-1}])$ obtained as $c [\alpha, \beta]^{-1}$), and one can obtain any conjugate of $c$ by noting that $c^{(a,1)}=c^\a$
and $c^{(b,1)}=c^\b$. Thus $P$ is generated by $\{\a,\b,c\}$. And since
$$
([a,b],1)= (a^{-1},a)\, (b^{-1},b)\, (ab, (ab)^{-1}),
$$
$P$ is also generated by $\{(a,a^{-1}),\, (b,b^{-1}),\, (ab, (ab)^{-1})\}$.
\end{proof}

\begin{proposition} If $F$ is the free group with basis $\{a,b\}$,
then $L(F)<\X(F)$ is 
generated by $\{a\-a^{-1},\, b\-b^{-1},\, ab(\-a\-b)^{-1}\}$.
\end{proposition}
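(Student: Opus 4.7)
The plan is to apply Lemma~\ref{l:ses} to the central extension $1\to W \to L \to \rho(L) \to 1$ with $C=W$, using $S = \{a\-a^{-1},\, b\-b^{-1},\, ab(\-a\-b)^{-1}\}$ as the candidate generating set. The subgroup $W$ is central in $L$ because $W\subset D$ and $[D,L]=1$. The argument inside the proof of Lemma~\ref{l:ses} then reduces the problem to two tasks: (i) show that the image of $S$ generates $L/W = \rho(L)$; and (ii) show that the image of $S$ generates $H_1(L,\Z) = L^{ab}$. Once both are done, $\langle S\rangle$ is normal in $L$ with $L/\langle S\rangle$ a quotient of $L^{ab}$ on which the images of $S$ are simultaneously trivial and generating, so $L = \langle S\rangle$.

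For (i) I compute $\rho(g\-g^{-1}) = (g,g,1)(1,g^{-1},g^{-1}) = (g,1,g^{-1})$ for $g\in G$. Under the identification $G\times 1\times G\cong G\times G$, the images of the three elements of $S$ become $(a,a^{-1}),\, (b,b^{-1}),\, (ab,(ab)^{-1})$; these are precisely the generators prescribed by Lemma~\ref{l:fib} for the subgroup $P = \langle (g,g^{-1}) : g\in G\rangle$, and $P$ equals $\rho(L)$ under this identification, so Lemma~\ref{l:fib} settles (i) immediately.

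For (ii) I pass through the Lima--Oliveira map $\nu\colon\X(G)\to H/I_2(G)$, where $H = \Z G\rtimes G$. This restricts to a surjection $L\twoheadrightarrow A(G)/I_2(G)\hookrightarrow M = \Z G/I_2(G)$ with kernel $[L,L]$, identifying $L^{ab}$ with the image of $A(G)$ in $M$. A direct semidirect-product calculation gives $\nu(g\-g^{-1}) = (1-g,\,1)$, so the images of $S$ in $M$ are $\{1-a,\, 1-b,\, 1-ab\}$. For $G=F_2$, Lima and Oliveira prove that $M$ is spanned as an abelian group by $\{1,a,b,ab\}$; since $I_2\subset A(G)$, the image of $A(G)$ in $M$ is the augmentation-zero subgroup of $\Z\{1,a,b,ab\}$, which is spanned by $\{a-1,\,b-1,\,ab-1\}$. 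Hence the images of $S$ generate $L^{ab}$, giving (ii). The main obstacle is the bookkeeping in (i) and (ii) --- verifying the formulas for $\rho$ and $\nu$ on the three generators and checking the augmentation-zero spanning statement in $M$ --- rather than any conceptual difficulty; once these are in hand, the assembly via Lemma~\ref{l:ses} is formal.
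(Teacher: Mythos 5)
Your proposal is correct and follows essentially the same route as the paper: reduce via Lemma~\ref{l:ses} applied to the central extension $1\to W\to L\to\rho(L)\to 1$ to checking generation of $\rho(L)$ (handled by Lemma~\ref{l:fib} after computing $\rho(g\-g^{-1})=(g,1,g^{-1})$) and of $H_1(L,\Z)$ (handled by the Lima--Oliveira identification of $L^{\rm ab}$ with $A(F)/I_2(F)$, spanned by $1-a$, $1-b$, $1-ab$ inside $M=\Z F/I_2(F)$). The only cosmetic difference is that the paper phrases the second step via the augmentation map $\mu:M\to\Z$ rather than your ``augmentation-zero subgroup of $\Z\{1,a,b,ab\}$'' wording, which amounts to the same thing.
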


\begin{proof} Applying the proof of Lemma \ref{l:ses} to (\ref{L:ses}), we see that
it is enough to prove that the image of the given subset of $L$ generates both $\rho(L)$
and $H_1(L,\Z)$. For $\rho(L)$, this is covered by Lemma \ref{l:fib}.

Lima and Oliveira show in \cite{L-O} that $M=\Z F/I_2(F)$ is generated as an abelian group by $X=\{1,a,b,ab\}$.
The augmentation map $\Z F\to\Z$ (defined by $w\mapsto 1$ for all $w\in F$) descends to
an epimorphism $\mu:M\to\Z$ defined by $\mu(x)=1$ (the generator of $\Z$) for each $x\in X$. Hence $\ker \mu = A(F)/I_2(F)$ is generated as an abelian group by $\{1-a, 1-b, 1 - ab \}$.
And $\nu(u\-u^{-1})= 1-u$ defines
an epimorphism $L\to A(F)/I_2(F)$ with kernel $[L,L]$.
\end{proof}

\begin{cor} $H_1(L(F_2),\Z)\cong\Z^3$.
\end{cor}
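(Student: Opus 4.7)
My plan is to leverage the Lima--Oliveira machinery already recalled in the proof of the preceding proposition. That proof identifies $H_1(L(F_2), \Z)$ with $A(F)/I_2(F)$ and shows the latter is generated by $\{1-a,\,1-b,\,1-ab\}$, so we already have a surjection $\Z^3 \twoheadrightarrow H_1(L(F_2),\Z)$; it remains to show it is injective. The idea is to construct a ring homomorphism out of $M = \Z F/I_2(F)$ whose image is a free abelian group in which the Lima--Oliveira additive generators $\{1, a, b, ab\}$ of $M$ (for $m=2$) are $\Z$-linearly independent.

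The target I would use is the exterior algebra $\Lambda = \Lambda^*(\Z x\oplus\Z y)$, which is free abelian of rank $4$ on $\{1, x, y, xy\}$. Define $\mu\colon \Z F\to\Lambda$ to be the ring homomorphism sending $a\mapsto 1+x$ and $b\mapsto 1+y$; this is well-defined because $1\pm x$ and $1\pm y$ are units in $\Lambda$. The main step is to verify that $\mu$ factors through $M$, that is, $(1-\mu(g))^2 = 0$ in $\Lambda$ for every $g\in F$. Here I would show first by induction on word length that $\mu(g) = 1 + \sigma_a(g)\,x + \sigma_b(g)\,y + c(g)\,xy$ for some integer $c(g)$, where $\sigma_a(g),\sigma_b(g)$ denote exponent sums, and then observe that squaring such an element produces only monomials of the shapes $x^2$, $y^2$, $(xy)^2$, $xy+yx$, $x\cdot xy + xy\cdot x$, $y\cdot xy + xy\cdot y$, each of which vanishes in $\Lambda$. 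This is the only substantive calculation in the argument.

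Once $\mu$ is known to factor through $M$, the images of $\{1, a, b, ab\}$ are $\{1,\,1+x,\,1+y,\,1+x+y+xy\}$, whose coordinates in the basis $\{1, x, y, xy\}$ of $\Lambda$ form a unitriangular $4\times 4$ matrix of determinant $1$. Hence the four images are $\Z$-linearly independent in $\Lambda\cong\Z^4$, and since they already generate $M$ additively by Lima--Oliveira, $M\cong\Z^4$. Consequently $A(F)/I_2(F)\cong\Z^3$ and hence $H_1(L(F_2),\Z)\cong\Z^3$.
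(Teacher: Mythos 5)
Your argument is correct, but it takes a genuinely different route from the paper. The paper's proof is a two-line squeeze: the preceding proposition shows $L=L(F_2)$ is generated by $3$ elements, so $H_1(L,\Z)$ is a quotient of $\Z^3$; on the other hand $L$ surjects onto $\rho(L)$, and a $5$-term exact sequence computation (citing Bridson--Miller) gives $H_1(\rho(L),\Z)\cong\Z^3$, so the $3$-generated abelian group $H_1(L,\Z)$ surjects onto $\Z^3$ and must equal $\Z^3$. You instead work entirely inside the Lima--Oliveira module: you take the identification $H_1(L,\Z)\cong A(F)/I_2(F)$ (recalled in the paper with kernel exactly $[L,L]$, so this is legitimate), and you establish the lower bound by exhibiting the explicit ring homomorphism $\Z F\to\Lambda^*(\Z x\oplus\Z y)$, $a\mapsto 1+x$, $b\mapsto 1+y$, checking that every $\mu(g)$ has the form $1+\alpha x+\beta y+\gamma xy$ and that the square of its augmentation-zero part vanishes, so that $\mu$ kills $I_2(F)$. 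The unitriangularity of the images of $\{1,a,b,ab\}$ then shows $M=\Z F/I_2(F)\cong\Z^4$, whence $A(F)/I_2(F)\cong\Z^3$ as the kernel of the induced augmentation. All steps check out (the closure of elements of the stated form under multiplication and inversion, and the vanishing of $u^2$ for $u=\alpha x+\beta y+\gamma xy$, are exactly as you say). What your approach buys is independence from the external computation of $H_1(P,\Z)$ for the subdirect product $P=\rho(L)<F\times F$, together with the slightly stronger and potentially useful byproduct that the Lima--Oliveira module $M$ is free abelian of rank $4$ on the monomial generators; what the paper's approach buys is brevity, since both inputs (the $3$-element generating set for $L$ and the structure of $H_1(P,\Z)$) are already on the table at that point in the text.
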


\begin{proof} $L$ is 3-generator and maps onto $H_1(\rho(L),\Z)$, which a
calculation with the 5-term exact sequence shows to be $\Z^3$; see
\cite[Theorem~A]{BM}.
\end{proof}

\section{Theorem B: Sidki's functor $\X$ preserves $\FP_2$}

We remind the reader that a group $G$ is of type $\FP_2$ if there is a 
projective resolution of the trivial $\Z G$-module $\Z$ that is finitely
generated up to dimension $2$. If
one expresses $G$ as the quotient of a finitely generated free group $G=F/R$
and considers $M=R/[R,R]$ as a $\Z F$-module (or $\Z G$-module)
via the action of $F$ on $R$ by conjugation, then $G$ is of type $\FP_2$ if
and only if $M$ is finitely generated as a module. 

For completeness, we prove the following well known fact; the forward direction is  \cite[Lemma~2.1]{BS}.

\begin{lemma}\label{l:FP2} A group $G$ is of type $\FP_2$ if and only if  there
is a finitely presented group $H$ and an epimorphism $H\to G$ with perfect
kernel.
\end{lemma}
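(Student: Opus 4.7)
My plan is to use the standard characterisation of type $\FP_2$: a finitely generated group $G$ is of type $\FP_2$ if and only if, writing $G=F/R$ with $F$ a finitely generated free group, the relation module $R/[R,R]$ (with the $F$-conjugation action, which descends to $\Z G$) is finitely generated as a $\Z G$-module. I will work with this throughout and handle the two implications separately, using a common diagram $F\to H\to G$ in which the middle term can be varied.

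For the forward direction, assume $G$ is $\FP_2$. Being $\FP_1$, $G$ is finitely generated, so I write $G=F/R$ with $F$ free of finite rank. By hypothesis there exist $r_1,\dots,r_n\in R$ whose images generate $R/[R,R]$ as a $\Z G$-module; equivalently, the $F$-conjugates of the $r_i$ together with $[R,R]$ generate $R$. Let $N=\langle\!\langle r_1,\dots,r_n\rangle\!\rangle_F$ and set $H=F/N$. Then $H$ is finitely presented and $H\twoheadrightarrow G$ has kernel $K=R/N$. The relation $R=N\cdot [R,R]$ immediately gives $K=[R,R]\cdot N/N=[R/N,R/N]=[K,K]$, so $K$ is perfect.

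For the converse, let $\phi:H\twoheadrightarrow G$ with $H$ finitely presented and $K=\ker\phi$ perfect. Pick a finitely generated free group $F$ together with an epimorphism $F\twoheadrightarrow H$; let $S$ be its kernel, which is finitely generated as a normal subgroup of $F$ (this is the finite presentability of $H$), so $S/[S,S]$ is a finitely generated $\Z H$-module. Let $\tilde K\triangleleft F$ be the preimage of $K$, so $G=F/\tilde K$ and $S\subset\tilde K$. The hypothesis that $K=\tilde K/S$ is perfect translates to $\tilde K = S\cdot[\tilde K,\tilde K]$. Hence the inclusion $S\hookrightarrow\tilde K$ induces an $F$-equivariant surjection $S/[S,S]\twoheadrightarrow \tilde K/[\tilde K,\tilde K]$, and the $F$-action on the target factors through $G=F/\tilde K$. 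Thus $\tilde K/[\tilde K,\tilde K]$ is finitely generated as a $\Z G$-module, proving $G$ is of type $\FP_2$.

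The only step requiring any care is the last one: checking that the map $S/[S,S]\to \tilde K/[\tilde K,\tilde K]$ really is $F$-equivariant (which is immediate from naturality of abelianisation under conjugation by $F$) and that the $\Z H$-module structure on $S/[S,S]$ pushes forward correctly to a $\Z G$-module structure on the quotient. I expect the mild obstacle to be phrasing the equality $\tilde K=S\cdot[\tilde K,\tilde K]$ cleanly from perfectness of $K$; everything else is formal. No new machinery beyond the module-theoretic characterisation of $\FP_2$ is needed.
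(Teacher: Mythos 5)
Your proposal is correct and follows essentially the same route as the paper: both directions rest on the relation-module characterisation of $\FP_2$ and on translating perfectness of the kernel into the equation $R=N\cdot[R,R]$ (your $\tilde K=S\cdot[\tilde K,\tilde K]$). Your converse phrases the final step as an $F$-equivariant surjection of relation modules where the paper writes out the element-wise product expression, but this is only a cosmetic difference.
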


\begin{proof} First assume that $G$ is of type $\FP_2$ and write  $G=F/R$ where $F$ is free of finite rank 
and $R/[R,R]$ is generated as a $\Z F$ module by the image of a finite set 
$S = \{r_1,\dots,r_m\}\subset R$. In $F$, every element of $R$ can be expressed as a product 
$$
r = \rho \prod_{i=1}^n \theta_ir_{j(i)}^{\pm 1}\theta_i^{-1},
$$
where $\rho\in [R,R]$ and $\theta_i\in F$.

Let $N\ns F$ be normal closure of $S$ in $F$, let $H=F/N$
and consider the natural surjection $\pi:H\to G$.
For each $rN\in R/N = \ker\pi$, the equation above projects to an equality $rN=\rho N$
in $H$, and hence $\ker \pi = R/N = [R,R]/N$ is perfect. 

Conversely, if $H=F/N$ is finitely presented, with $N=\<\!\<r_1,\dots,r_m\>\!\>$ say,
and $H\to G$ is a surjection with perfect kernel $R/N$, then by hypothesis every
$r\in R$ lies in $[R,R]N$. Thus we can express $r$ in the free group as a product of
the above form. This shows that $M=R/[R,R]$ is generated as a $\Z F$ module
by the images of $\{r_1,\dots,r_m\}$. And as $M$ is the relation
module for $G=F/R$, this implies that $G$ is of type $\FP_2$.
\end{proof}

\begin{theorem} $G$ is $\FP_2$ if and only if $\X(G)$ is $\FP_2$.
\end{theorem}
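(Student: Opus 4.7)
The plan is to apply Lemma \ref{l:FP2} in both directions, combining it with the functoriality of $\X$ and with Theorem \ref{thmA}.

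The forward direction will be immediate: since $\X(G)=L\rtimes G$, the group $G$ is a retract of $\X(G)$, and $\FP_2$ is inherited by retracts, so $\X(G)\in\FP_2$ forces $G\in\FP_2$.

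For the converse, I would start from a presentation $G=H/K$ provided by Lemma \ref{l:FP2}, with $H$ finitely presented and $K\ns H$ perfect. Applying $\X$ to the quotient $\pi:H\to G$ yields an epimorphism $\tilde\pi=\X(\pi):\X(H)\to\X(G)$, and by Theorem \ref{thmA} the group $\X(H)$ is finitely presented. The whole proof then reduces, via Lemma \ref{l:FP2} applied to $\tilde\pi$, to verifying that $\ker\tilde\pi$ is perfect.

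To pin down the kernel, I would use the commuting square whose top row is the natural epimorphism $H*\-H\to G*\-G$ and whose columns are the defining quotients onto $\X(H)$ and $\X(G)$. The top horizontal map has kernel the normal closure of $K\cup\-K$ in the free product, and the defining commutator relators $[h,\-h]$ of $\X(H)$ are sent onto the defining relators $[g,\-g]$ of $\X(G)$; an elementary diagram chase then shows that $\ker\tilde\pi$ is precisely the normal closure of $K\cup\-K$ in $\X(H)$. Since $K$ is perfect, every $\X(H)$-conjugate of $K$ and of $\-K$ is perfect, and a subgroup generated by any family of perfect subgroups is itself perfect (each generator maps trivially to the abelianisation). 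Hence $\ker\tilde\pi$ is perfect, completing the converse.

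I do not anticipate a serious obstacle: the only step requiring any care is the identification of $\ker\tilde\pi$, and this follows from a routine diagram chase using the functorial presentation of $\X$. Everything else reduces to the two general facts that $\FP_2$ is closed under retracts and that the subgroup generated by perfect subgroups is perfect.
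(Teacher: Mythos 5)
Your proposal is correct and follows essentially the same route as the paper: the retract argument for the forward direction, and for the converse the passage from $H\to G$ with perfect kernel $K$ to $\X(H)\to\X(G)$ with kernel the normal closure of $K\cup\-K$, which is perfect, combined with Theorem \ref{thmA} and Lemma \ref{l:FP2}. No substantive differences.
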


\begin{proof} 
Every retract of a group of type $\FP_m$ is $\FP_m$, so if $\X(G)=L\rtimes G$ is $\FP_m$
(for any $m$) then so is $G$.

Conversely, if $G$ is $\FP_2$ then by the lemma there is 
an epimorphism $\pi:H\to G$ with kernel $K$ where $H$ is finitely
presented and $K$ is perfect. The product of perfect subgroups is perfect and
the normal closure of a perfect subgroup is perfect, so the normal subgroup $N<\X(H)$ 
generated by $K\cup \-{K} \subset H\cup \-H$ is perfect. $N$ is the
kernel of the map $\pi : \X(H) \to \X(G)$ defined by $h\mapsto \pi(h)$ and $\-h\mapsto\-{\pi(h)}$. From Theorem A we know that $\X(H)$ is finitely presented, so $\X(G)$ is
of type $\FP_2$, by Lemma \ref{l:FP2}.
\end{proof} 

 In \cite{B-B} Bestvina and Brady constructed groups that  are of type $\FP_{\infty}$ but are not finitely presented. Recently, Leary \cite{L} showed that there are uncountably many groups with these properties.

\begin{cor} If $G$ is a group of type $\FP_2$ that is not finitely presented,
then $\X(G)$ is a group of type $\FP_2$ that is not finitely presented.
\end{cor}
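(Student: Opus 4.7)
The plan is to observe that this corollary follows immediately by combining the theorem just proved (that $\X$ preserves type $\FP_2$ in both directions) with Theorem A (that $\X$ preserves finite presentability in both directions). There is no real obstacle to overcome: the content of the corollary is essentially just that these two equivalences together imply the analogous equivalence for the property ``$\FP_2$ but not finitely presented.''

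In detail, suppose $G$ is of type $\FP_2$ but not finitely presented. First I would apply the preceding theorem in the forward direction to conclude that $\X(G)$ is of type $\FP_2$. Next I would apply Theorem A in its contrapositive form: since $G$ is not finitely presented, $\X(G)$ cannot be finitely presented either. Together these give the conclusion.

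The existence of groups to which this corollary applies is guaranteed by the Bestvina--Brady construction \cite{B-B} (and more recently by Leary's uncountable family \cite{L}), as already recalled just before the statement; so the corollary is not vacuous. The only subtlety worth flagging is that Theorem A is stated and proved for arbitrary $G$ without any finiteness hypothesis beyond that which is built into the ``only if'' direction, so it applies directly to any $\FP_2$ group without needing to be re-derived. Thus the proof is a one-line appeal to Theorem A and the preceding theorem, and no further argument is required.
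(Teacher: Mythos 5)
Your proof is correct and is exactly the argument the paper intends: the corollary is stated without proof precisely because it follows immediately from the theorem just proved together with the contrapositive of Theorem A. Nothing is missing.
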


\section{$\X(F)$ is not of type $\FP_3$}

We prove the following strengthening of Theorem B.

\begin{theorem}\label{t:B'} If $F$ is a free group of finite rank $m\ge 2$, then there is a subgroup of finite index
$\G<\X(F)$ with $H_3(\G,\Z)$ not finitely generated.
\end{theorem}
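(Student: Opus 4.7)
\noindent\emph{Overall strategy.}
I plan to analyse $H_3(\Gamma,\Z)$ for a suitable finite-index subgroup $\Gamma<\X(F)$ by running the Lyndon--Hochschild--Serre spectral sequences for the two normal extensions highlighted in Section~\ref{prelimX(G)}:
\[
1\to W\to \X(F)\to Q_F\to 1
\quad\text{and}\quad
1\to W\to L\to \rho(L)\to 1.
\]
Both quotients are subdirect products of non-abelian free groups: $\rho(L)$ is the fibre product of two copies of $F$ over $F^{\rm ab}$, and $Q_F$ is the analogous $3$-fold fibre product. BHMS1-style analysis of such subdirect products supplies the two key inputs: $H_2(\rho(L),\Z)$ is not finitely generated, and $H_3(Q_F,\Z)$ is not finitely generated (although $Q_F$ is of type $\FP_2$; failure of $\FP_3$ then forces $H_3$ to be non-f.g. by the standard $\FP_n$-vs-$H_n$ comparison).

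\medskip
\noindent\emph{Choice of $\Gamma$ and the main calculation.}
I pass to a finite-index subgroup $\Gamma<\X(F)$ on which the conjugation action of $\Gamma/W$ on $H_*(W,\Z)$ is manageable. This is feasible because $W$ is centralised by $DL$, and $\X(F)/DL$ is a quotient of $F^{\rm ab}\cong\Z^m$ (the image of $D$ in $\X(F)/L\cong F$ generates $[F,F]$); so $\Gamma$ is taken as the preimage of a deep enough finite-index subgroup of this $\Z^m$ quotient. The LHS spectral sequence
\[
E^2_{p,q}=H_p\bigl(\Gamma/W,\,H_q(W,\Z)\bigr)\ \Longrightarrow\ H_{p+q}(\Gamma,\Z)
\]
has $E^2_{3,0}=H_3(\Gamma/W,\Z)$ not finitely generated, since $\Gamma/W$ is a finite-index subgroup of $Q_F$ and failure of $\FP_3$ is inherited by finite-index subgroups. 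The task is to show that enough of this non-finite-generation survives the differentials $d_2:E^2_{3,0}\to E^2_{1,1}$ and $d_3:E^3_{3,0}\to E^3_{0,2}$ to yield a non-finitely-generated subgroup of $E^\infty_{3,0}\subseteq H_3(\Gamma,\Z)$. Here I use the companion spectral sequence for $L\to\rho(L)$ and the Lima--Oliveira finite generation of $H_1(L,\Z)$ in tandem: the associated $5$-term exact sequence identifies $W$, modulo a finitely generated piece, with the transgression image of $H_2(\rho(L),\Z)$, which pins down explicit generators for the $q\ge 1$ rows of the spectral sequence and thereby bounds the images of $d_2$ and of the interfering positions $E^2_{2,1}$ and $E^2_{1,2}$ that could otherwise contribute $\FP$-finiteness to $H_3(\Gamma,\Z)$.

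\medskip
\noindent\emph{Main obstacle.}
The hardest step is controlling the higher differential $d_3:E^3_{3,0}\to E^3_{0,2}\subseteq H_2(W,\Z)$: because $W$ need not be finitely generated as an abelian group, $H_2(W,\Z)$ can be large, and one must argue that $d_3$ does not absorb all the non-f.g.\ part of $H_3(\Gamma/W,\Z)$ that has already survived $d_2$. To handle this I would use the BHMS1 description of $H_3$ of the $3$-fold fibre product --- as classes represented by nested commutators in the three free factors --- and match these explicit cycles against the generators of $W$ furnished by the $L\to \rho(L)$ transgression; a rank count against the finitely generated layers of the spectral sequence should then force a non-f.g.\ remnant of $H_3(\Gamma/W,\Z)$ to persist in $E^\infty_{3,0}$. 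The essential use of \emph{both} spectral sequences simultaneously --- the $L\to\rho(L)$ sequence to identify $W$ and its generators, and the $\X(F)\to Q_F$ sequence to detect the non-finite generation at the top row --- is what makes the argument go through.
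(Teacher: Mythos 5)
There is a genuine gap at the heart of your plan: you never actually establish that the non-finite generation of $E^2_{3,0}=H_3(\Gamma/W,\Z)$ survives the differentials $d_2\colon E^2_{3,0}\to E^2_{1,1}=H_1(\Gamma/W,W)$ and $d_3\colon E^3_{3,0}\to E^3_{0,2}$, and this step is not a technicality that a ``rank count against the finitely generated layers'' can supply. The difficulty is that $W$ is \emph{not} finitely generated in general (the authors prove this elsewhere for rank $\ge 3$), so the target groups $H_1(\Gamma/W,W)$ and $H_2(W,\Z)$ are themselves typically infinitely generated; there are no finitely generated layers to count against, and an infinitely generated kernel of the edge map could in principle be entirely absorbed. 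Your proposed remedy --- matching explicit BHMS-style cycles in $H_3$ of the $3$-fold fibre product against transgression generators of $W$ --- is a research programme, not an argument. Note also a smaller error: failure of $\FP_3$ does \emph{not} imply that $H_3(-,\Z)$ is infinitely generated (the implication goes the other way), so you must invoke \cite[Thm.~C]{BHMS1} directly, which produces \emph{some} finite-index subgroup $\Delta$ with $H_3(\Delta,\Z)$ infinitely generated, and arrange $\Gamma$ to lie over it.

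The paper's proof is structured precisely to avoid the problem you flag as your ``main obstacle.'' It argues by contradiction: assuming $H_3(\Gamma,\Z)$ is finitely generated for \emph{every} finite-index $\Gamma<\X(F)$, the spectral sequence over $Q_F$ is only ever run under the auxiliary hypothesis that $W$ \emph{is} finitely generated --- in which case $E^2_{1,1}$ and $H_2(W,\Z)$ are finitely generated and the survival of $E^2_{3,0}$ is automatic --- yielding the conclusion that $W$ is \emph{not} finitely generated (Lemma \ref{last1}). The contradiction is then produced by an entirely different mechanism that your proposal omits: Marshall Hall's theorem gives a finite-index $\Gamma=L\rtimes F_0$ that splits as an HNN extension $B\ast_L$ with the stable letter centralising $L$, a Mayer--Vietoris argument (Lemma \ref{last3}) converts finite generation of $H_3(\Gamma,\Z)$ into finite generation of $H_2(L,\Z)$, and the central extension $1\to W\to L\to\rho(L)\to 1$ with $\cd(\rho(L))\le 2$ (Lemma \ref{last2}) then forces $W$ to be finitely generated after all. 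You would need either to supply the missing control of $d_2$ and $d_3$ in the non-finitely-generated regime, or to restructure your argument along these contrapositive lines.
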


We assume that the reader is familiar with the Lyndon-Hochschild-Serre (LHS)
spectral sequence associated to a short exact sequence of groups.

\begin{lemma}\label{last1} Let  $F$ be a free group of finite rank $m\ge 2$. If $H_3(\G,\Z)$ is finitely generated for all subgroups 
of finite index $\G\le\X(F)$, then
$W(F)$ is not finitely generated.
\end{lemma}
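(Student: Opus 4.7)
The plan is to argue the contrapositive: assuming $W=W(F)$ is finitely generated, I will show that $H_3(\X(F),\Z)$ itself fails to be finitely generated, which contradicts the hypothesis applied to $\G=\X(F)$ (a subgroup of finite index in itself). The workhorse is the Lyndon--Hochschild--Serre spectral sequence of the short exact sequence
$$
1 \to W \to \X(F) \to Q_F \to 1
$$
established in Section \ref{prelimX(G)}, where $Q_F = \{(g_1,g_2,g_3)\in F\times F\times F : g_1 g_2^{-1} g_3 \in [F,F]\}$.

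I need three preliminary facts. First, by the virtual-surjection-to-pairs criterion from \cite{BHMS2} (already applied in Section \ref{prelimX(G)}), $Q_F$ is finitely presented, hence of type $\FP_2$. Secondly, after the change of variables $(g_1,g_2,g_3)\mapsto (g_1,g_2^{-1},g_3)$, $Q_F$ identifies with the symmetric fibre product $\{(h_1,h_2,h_3)\in F^3 : h_1h_2h_3\in [F,F]\}$, a subdirect product of three non-abelian free groups in which every pairwise projection is surjective; the analysis of subdirect products of free groups in \cite{BHMS1} then applies to give that $H_3(Q_F,\Z)$ is not finitely generated. Thirdly, if $W$ is finitely generated abelian then each $H_q(W,\Z)$ is a finitely generated abelian group (by Künneth applied to $W\cong \Z^k \oplus T$ with $T$ finite).

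With these in hand I analyse the LHS spectral sequence
$$
E^2_{p,q} = H_p(Q_F,\, H_q(W,\Z)) \;\Longrightarrow\; H_{p+q}(\X(F),\Z).
$$
Since $Q_F$ is of type $\FP_2$ and each coefficient module $H_q(W,\Z)$ is finitely generated as an abelian group (and hence as a $\Z Q_F$-module), every $E^2_{p,q}$ with $p\le 2$ is finitely generated. On the antidiagonal $p+q=3$ this leaves $E^2_{3,0}=H_3(Q_F,\Z)$ as the unique non-finitely-generated entry. The incoming differentials $d^r:E^r_{3+r,1-r}\to E^r_{3,0}$ all vanish because $1-r<0$ for $r\ge 2$, and the outgoing differentials $d^r:E^r_{3,0}\to E^r_{3-r,r-1}$ are nontrivial only for $r=2,3$, with targets finitely generated as subquotients of $E^2_{1,1}$ and $E^2_{0,2}$. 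An elementary fact about abelian groups---the kernel of a homomorphism from a non-finitely-generated abelian group to a finitely generated one is again non-finitely-generated---applied twice gives $E^\infty_{3,0}=E^4_{3,0}$ non-finitely-generated. Since $E^\infty_{3,0}$ is the quotient $F_3/F_2$ in the filtration of $H_3(\X(F),\Z)$, the whole group $H_3(\X(F),\Z)$ fails to be finitely generated, contradicting the hypothesis taken with $\G=\X(F)$.

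The main obstacle is the second preliminary fact: identifying $Q_F$ as a subdirect product falling within the scope of \cite{BHMS1} and extracting the statement that $H_3(Q_F,\Z)$ is not finitely generated. Once that ``asymmetric Bieri--Stallings'' input is in place, everything else is formal spectral-sequence bookkeeping and routine homological algebra.
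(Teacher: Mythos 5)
Your overall strategy --- run the LHS spectral sequence of $1\to W\to \X(F)\to Q_F\to 1$ and argue that if $W$ were finitely generated then the non-finite generation of an $E^2_{3,0}$ term would survive to $E^\infty$ --- is exactly the paper's, and your spectral-sequence bookkeeping (vanishing incoming differentials, finitely generated targets $E^2_{1,1}$ and $E^2_{0,2}$, the kernel-of-a-map-to-a-finitely-generated-group observation) is correct. But there is one genuine gap, and it is precisely the step you flag as the ``main obstacle'': you assert that \cite{BHMS1} yields that $H_3(Q_F,\Z)$ \emph{itself} is not finitely generated. What \cite[Thm.~C]{BHMS1} actually provides, for a finitely presented full subdirect product of infinite index in $F\times F\times F$, is a subgroup $\Delta\le Q_F$ \emph{of finite index} with $H_3(\Delta,\Z)$ not finitely generated. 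Non-finite generation of $H_3$ does not automatically ascend from a finite-index subgroup to the ambient group: the transfer argument only shows that $H_3(Q_F,\Q)$ is a direct summand of $H_3(\Delta,\Q)$, which is the wrong direction, and $H_3(\Delta,\Q)$ can perfectly well be an infinite sum of nontrivial $\Q[Q_F/\Delta]$-irreducibles with trivial coinvariants. So the input you need is strictly stronger than what the cited theorem supplies, and it would require a separate proof.

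The fix is the one the paper uses, and it explains why the lemma is stated for \emph{all} finite-index subgroups $\G\le\X(F)$ rather than just $\X(F)$: take the finite-index subgroup $\Delta\le Q_F$ furnished by \cite{BHMS1}, set $\G=\rho^{-1}(\Delta)$, which has finite index in $\X(F)$, and run your identical spectral-sequence argument for $1\to W\to \rho^{-1}(\Delta)\to\Delta\to 1$ (here one only needs that $H_1(\Delta,W)$ and $H_2(W,\Z)$ are finitely generated when $W$ is, so the full strength of $Q_F$ being $\FP_2$ is not even required --- finite generation of $\Delta$ suffices for the $E^2_{1,1}$ term). The conclusion that $H_3(\rho^{-1}(\Delta),\Z)$ is not finitely generated then contradicts the hypothesis applied to $\G=\rho^{-1}(\Delta)$. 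Everything else in your write-up goes through verbatim after this substitution.
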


\begin{proof} $W=W(F)$ is the kernel of the natural map $\rho:\X(F)\to F\times F\times F$
described in Section 2, the image of which is a finitely
presented, full subdirect product
of infinite index. Every such subgroup of $F\times F\times F$ 
is finitely presented but has a subgroup of finite
index $\Delta$ with $H_3(\Delta,\Z)$ not finitely generated \cite[Thm.~C]{BHMS1}. If
the abelian group $W$ were finitely generated, then it would
be of type $\FP_\infty$, and from the LHS spectral sequence 
$E^2_{p,q} = H_p(\Delta, H_q(W, \mathbb{Z}))$ that converges to $H_{p+q}(\rho^{-1}(\Delta), \mathbb{Z})$,
we would conclude
that $H_3(\rho^{-1}(\Delta),\Z)$ 
was not finitely generated, contrary to hypothesis. 

In more detail, $E^{\infty}_{3,0}$
is obtained from $E^2_{3,0}=H_3(\Delta,\Z)$ by passing to the kernel of 
$d_2: E^2_{3,0}\to E^2_{1,1}=H_1(\Delta, W)$ and then the kernel of $d_3 : E_{3,0}^3 \to E_{0,2}^3$,
whose codomain is a quotient of $H_2(W, \Z)$; if $W$ were finitely generated then $H_1(\Delta, W)$ would be as well (since $\Delta$ is finitely genereated), as would $H_2(W, \Z)$; but this would imply
that $E^{\infty}_{3,0}$ (whence $H_3(\rho^{-1}(\Delta),\Z)$)
 was not finitely generated. 
\end{proof}

\begin{lemma}\label{last2} Let $1\to A\to G\to Q\to 1$ be a central extension 
of a finitely generated group $Q$ of cohomological dimension $\cd(Q)\le 2$.
If the abelianisation of $Q$ is infinite
and $A$ is not finitely generated, then $H_2(G,\Z)$ is not finitely generated.
\end{lemma}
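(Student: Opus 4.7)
The plan is to apply the Lyndon--Hochschild--Serre spectral sequence for the central extension $1\to A\to G\to Q\to 1$,
$$
E^2_{p,q} = H_p(Q, H_q(A,\Z)) \;\Rightarrow\; H_{p+q}(G,\Z),
$$
and to exhibit a non--finitely generated subquotient of $H_2(G,\Z)$ on its $E^\infty$--page. Since $\Z$ is Noetherian, every subquotient of a finitely generated abelian group is finitely generated, so producing one such subquotient will finish the proof.

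The term to concentrate on is $E^2_{1,1}=H_1(Q,A)$. Centrality forces $Q$ to act trivially on $A$ (and on each $H_q(A,\Z)$), and since $H_0(Q,\Z)=\Z$ is torsion--free, the universal coefficient theorem reduces to an isomorphism $H_1(Q,A)\cong Q^{ab}\otimes_\Z A$. The hypotheses that $Q$ is finitely generated and that $Q^{ab}$ is infinite yield $Q^{ab}\cong\Z^r\oplus T$ with $r\ge 1$ and $T$ finite, whence $Q^{ab}\otimes A$ has $A^r$ as a direct summand. Since $A$ is not finitely generated, neither is $H_1(Q,A)$.

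Next I would check that $E^2_{1,1}$ survives to $E^\infty_{1,1}$. Outgoing differentials from $E^r_{1,1}$ land in $E^r_{1-r,r}$, whose first index is negative for every $r\ge 2$, so they vanish. Incoming differentials come from $E^r_{r+1,2-r}$; for $r\ge 3$ the second index is negative, while for $r=2$ the source is $E^2_{3,0}=H_3(Q,\Z)$, which is zero by $\cd(Q)\le 2$. Hence $E^\infty_{1,1}=H_1(Q,A)$ appears as a subquotient of $H_2(G,\Z)$ in the edge filtration, and the lemma follows from the observation of the first paragraph.

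There is no serious obstacle here: the two quantitative hypotheses of the lemma each play a precise role, with $\cd(Q)\le 2$ introduced to kill the only differential that could annihilate the $(1,1)$ spot, and the infinitude of $Q^{ab}$ introduced to propagate the failure of finite generation from $A$ into $H_1(Q,A)$. The only detail requiring mild attention is the routine bookkeeping of spectral sequence differentials around $(p,q)=(1,1)$.
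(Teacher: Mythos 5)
Your argument is correct and follows essentially the same route as the paper: both isolate $E^2_{1,1}=H_1(Q,A)\cong H_1(Q,\Z)\otimes A$ in the LHS spectral sequence, use $\cd(Q)\le 2$ to show this term survives to $E^\infty$, and extract a non-finitely-generated copy of $A$ from the infinite abelianisation of $Q$. Your bookkeeping of the differentials around $(1,1)$ is slightly more explicit than the paper's, but the content is identical.
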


\begin{proof} As $\cd(Q)\le 2$, all of the non-zero terms on the
$E^2$ page of the LHS spectral sequence in homology associated to the given extension
are concentrated in the first three columns. Thus $E^3=E^\infty$ and the $d_2$
differentials beginning and ending at $E^2_{1,1}=H_1(Q, A)$ are
both zero maps. So $E^\infty_{1,1}$, which is a section of $H_2(G,\Z)$,
is  $H_1(Q,A) \cong H_1(Q,\Z)\otimes A$,
which contains a copy of $A$, since $H_1(Q,\Z)$ has $\Z$ as a direct summand.
As $A$ is not finitely generated, neither is $H_2(G,\Z)$.
\end{proof}

We shall need the following homological variation 
of Miller's lemma \cite[Lemma~ 2]{Miller}.
(A more comprehensive generalisation is described in Remark  \ref{remark} (2) below.)

\begin{lemma}\label{last3} If $H_n(B,\Z)$ is finitely generated but $H_{n}(C,\Z)$
is not, then $H_{n+1}(\G,\Z)$ is not finitely generated
for any HNN extension of the form $\G=B\ast_C$. 
\end{lemma}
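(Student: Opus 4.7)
The plan is to invoke the Mayer--Vietoris long exact sequence in homology associated to the HNN splitting $\G=B\ast_C$. Realising $\G$ via its action on the Bass--Serre tree (one orbit of vertices with stabiliser $B$, one orbit of edges with stabiliser $C$) yields the standard six-term sequence
\[
\cdots \to H_{n+1}(\G,\Z)\xrightarrow{\ \partial\ } H_n(C,\Z)\xrightarrow{\ \iota_\ast-\tau_\ast\ } H_n(B,\Z)\to H_n(\G,\Z)\to \cdots,
\]
where $\iota,\tau: C\to B$ are the two associated-subgroup inclusions (see Brown, \emph{Cohomology of Groups}, VII.9). This is the only non-elementary input the argument requires.

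Suppose, for contradiction, that $H_{n+1}(\G,\Z)$ is finitely generated. Exactness at $H_n(C,\Z)$ gives a short exact sequence of abelian groups
\[
0\to \imm(\partial)\to H_n(C,\Z)\to \imm(\iota_\ast-\tau_\ast)\to 0.
\]
The kernel term $\imm(\partial)$ is a quotient of $H_{n+1}(\G,\Z)$, so it is finitely generated. The cokernel term $\imm(\iota_\ast-\tau_\ast)$ is a subgroup of $H_n(B,\Z)$, which is finitely generated by hypothesis, and subgroups of finitely generated abelian groups are finitely generated. Hence $H_n(C,\Z)$ is finitely generated, contradicting the hypothesis.

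I do not anticipate a real obstacle: once one has cited (or briefly set up) the HNN Mayer--Vietoris sequence, the deduction is a one-line sandwich argument using only that subgroups and quotients of finitely generated abelian groups are finitely generated. The only small subtlety is ensuring the statement of the Mayer--Vietoris sequence is exactly the right one for an HNN extension (rather than an amalgamated product); this is standard, and the author can simply reference Brown or Bieri's lecture notes.
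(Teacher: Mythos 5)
Your proposal is correct and follows exactly the paper's argument: the paper's proof simply exhibits the exact segment $H_{n+1}(\G,\Z)\to H_n(C,\Z)\to H_n(B,\Z)$ of the Mayer--Vietoris sequence for the HNN extension and leaves the rest implicit. Your write-up merely spells out the concluding ``sandwich'' step (quotients and subgroups of finitely generated abelian groups are finitely generated), which is a welcome but inessential elaboration.
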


\begin{proof} The Mayer-Vietoris sequence for the HNN extension contains an exact
sequence
$$  H_{n+1}(\G,\Z) \to H_{n}(C,\Z) \to H_{n}(B,\Z).
$$ 
\end{proof}

\noindent{\bf{Proof of Theorem \ref{t:B'}}} We shall derive a contradiction from the
assumption that $H_3(\G,\Z)$ is finitely generated for all subgroups 
of finite index $\G<\X(F)$.

As in the proof of Lemma \ref{Lcriterion},
there is a subgroup of finite index $F_0<F$ such that
$\G:=L \rtimes F_0$, which has finite index in $\X(F)$,
is an HNN extension $B{\ast}_L$
with base group $B:=L \rtimes F_1$ and associated subgroup $L = L(F)$; the
stable letter commutes with $L$.

$\G$ is finitely presented and $H_3(\G,\Z)$ is finitely
generated. $B$, as a retract of  $\G$, inherits these properties. By Lemma \ref{last3}, this
implies that $H_2(L,\Z)$ is finitely generated. By applying
Lemma \ref{last2} to the short exact sequence $1\to W\to L\to \rho(L)\to 1$,
noting that $\rho(L)<F\times F$ has cohomological dimension 2,  we deduce that
$W$ is finitely generated. This contradicts Lemma \ref{last1}, so the proof is
complete. \qed

\begin{remarks} \label{remark}  (1) Care is needed in interpreting the
logic of the above proof: we did not prove that $W(F)$ and $H_2(L(F), \mathbb{Z})$
are not finitely generated.

\smallskip

(2)  The more comprehensive homological version 
of Miller's lemma alluded to above is the following:
Suppose $G$ is the fundamental group of a
finite graph of groups where all vertex groups $G_v$ are of type $\FP_m$;
if $G$ is of type $\FP_m$, then all of edge groups $G_e$ are of type  $\FP_{m-1}$.
\end{remarks}

\subsection{Related observations}

\begin{proposition}\label{Wfg} Let $G$ be a finitely presented group. Then $W(G)$ is finitely generated if $H_2(P,\Z)$ is finitely generated, where
$P =\langle (g,g^{-1}) : g\in G \rangle$ is the kernel of the map $G\times G\to G/G'$
that restricts to the natural surjection on each factor.
\end{proposition}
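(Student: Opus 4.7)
The plan is to exploit the central extension $1\to W\to L\to \rho(L)\to 1$ recorded in (\ref{L:ses}) together with the five-term exact sequence in integral homology. The first step is to identify $\rho(L)$ with $P$ inside $G\times G$. Sidki's description of the image of $\rho$ as $Q_G=\{(g_1,g_2,g_3)\mid g_1 g_2^{-1}g_3\in[G,G]\}$ shows that $\rho(L)$, which lies in $G\times 1\times G$, is precisely $\{(g_1,g_3)\mid g_1 g_3\in[G,G]\}$; and this is exactly the kernel of the map $G\times G\to G/G'$ that restricts to the canonical surjection on each factor, so $\rho(L)=P$.

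Next I feed the short exact sequence $1\to W\to L\to P\to 1$ into the five-term sequence in homology:
\[
H_2(L,\Z)\to H_2(P,\Z)\to W/[L,W]\to H_1(L,\Z)\to H_1(P,\Z)\to 0.
\]
Since Sidki shows that $W$ is central in $DL$ (and hence in $L$), we have $[L,W]=1$, so the third term collapses to $W$ itself, leaving
\[
H_2(P,\Z)\to W\to H_1(L,\Z)\to H_1(P,\Z)\to 0.
\]

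The remaining input required is finite generation of $H_1(L,\Z)$, and this is precisely the content of the Lima--Oliveira result recalled in Section~\ref{prelimX(G)}: $L^{\mathrm{ab}}$ embeds in $M=\Z G/I_2(G)$, which is generated as an abelian group by $1$ together with the square-free monomials in a fixed finite generating set of $G$, hence is finitely generated whenever $G$ is. With this in hand, the four-term sequence above presents the abelian group $W$ as an extension of a subgroup of $H_1(L,\Z)$ by a quotient of $H_2(P,\Z)$; under the hypothesis that $H_2(P,\Z)$ is finitely generated, both the sub and the quotient are finitely generated abelian groups, so $W$ is finitely generated.

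I do not anticipate a serious obstacle. The only step that might need more than a line is the identification $\rho(L)=P$, which falls out of Sidki's formula for $Q_G$. It is worth noting that the argument uses only finite generation of $G$ (through Lima--Oliveira); the finite presentation hypothesis stated in the proposition is not actually invoked.
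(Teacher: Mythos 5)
Your argument is correct and is essentially the paper's proof: the five-term exact sequence you invoke is exactly the low-degree corollary of the LHS spectral sequence $E^2_{p,q}=H_p(P,H_q(W,\Z))\Rightarrow H_{p+q}(L,\Z)$ that the paper analyses, and both arguments rest on the same three inputs (centrality of $W$ in $L$, the Lima--Oliveira finite generation of $H_1(L,\Z)$, and the hypothesis on $H_2(P,\Z)$). Your closing observation that only finite generation of $G$ is really used is also consistent with the paper's treatment.
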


\begin{proof} Consider the LHS spectral sequence in homology  for the central extension  $1\to W\to L \to \rho(L)\to 1$
and note that $\rho(L)=P$, so
the spectral sequence $
E^2_{p,q} = H_p(P, H_q(W, \mathbb{Z}))$ 
 converges to $H_{p+q}(L, \mathbb{Z})$.
Then $E^{\infty}_{1,0} = E^2_{1,0} = H_1(P, \mathbb{Z})$, $E^{\infty}_{0,1} = E^3_{0,1} = {\rm{Coker}} (d^2_{2,0})$, where $d^2_{2,0} : E^2_{2,0} = H_2(P, \mathbb{Z}) \to E^2_{0,1}$, and there is an exact sequence
$$
E^{\infty}_{0,1} \to H_1(L, \mathbb{Z}) \to E^{\infty}_{1,0}
$$
By a result of Lima and Oliveira \cite{L-O} (or Proposition \ref{p:Lfg} above)
$H_1(L, \mathbb{Z})$ is finitely generated.
Hence $E^{\infty}_{0,1} = {\rm{Coker}}(d^2_{2,0})$ is finitely generated, and
since $H_2(P, \mathbb{Z})$ is finitely generated this implies $E^2_{0,1} = H_0(P, W) = W$ is finitely generated.
\end{proof}

\begin{cor} When $H_2(P,\Z)$ is finitely generated,  $L$ is finitely presented
(respectively, ${\rm{FP}}_k$) if and
only if $P$ is finitely presented (respectively, ${\rm{FP}}_k$).
\end{cor}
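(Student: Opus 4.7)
The plan is to combine Proposition~\ref{Wfg} with classical results on group extensions. Since Proposition~\ref{Wfg} presupposes that $G$ is finitely presented, I carry this over as a standing hypothesis. Then the assumption that $H_2(P,\mathbb{Z})$ is finitely generated yields via Proposition~\ref{Wfg} that $W = W(G)$ is finitely generated, and as $W$ is abelian it is therefore both finitely presented and of type $\FP_\infty$. Recall from Section~\ref{prelimX(G)} that $\rho(L)=P$, so we have a central extension
$$
1 \to W \to L \to P \to 1,
$$
and both equivalences in the corollary reduce to standard extension results applied to this sequence.

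For the $\FP_k$ equivalence I would invoke Bieri's finiteness results for extensions (cf.\ Proposition~2.7 of \emph{Homological dimension of discrete groups}): (i) if the kernel of an extension is of type $\FP_\infty$ and the quotient is $\FP_k$, then the middle group is $\FP_k$; and (ii) if the middle group is $\FP_k$ and the kernel is $\FP_{k-1}$, then the quotient is $\FP_k$. Both hypotheses on $W$ are met here since $W$ is of type $\FP_\infty$, so $L$ is $\FP_k$ if and only if $P$ is $\FP_k$.

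For the ``finitely presented'' equivalence, both directions are classical for central extensions with finitely generated kernel. In the forward direction one takes a finite presentation $\langle X\mid R\rangle$ of $P$, lifts $X$ to $\widetilde X\subset L$, picks a finite generating set $Y$ for $W$, expresses each lifted relator $\tilde r\in W$ as a word in $Y$, and adjoins the resulting adjusted relators together with a finite presentation of the finitely generated abelian group $W$ and the commutation relations between $Y$ and $\widetilde X\cup Y$ (using that $W$ is central); this yields a finite presentation of $L$. Conversely, if $L$ is finitely presented, then appending the finitely many generators of $W$ as additional relators to a finite presentation of $L$ yields a finite presentation of $L/W=P$. The main obstacle is conceptual rather than technical: once $W$ has been shown to be finitely generated via Proposition~\ref{Wfg}, the remainder of the argument is a matter of correctly applying standard extension machinery.
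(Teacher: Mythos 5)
Your proposal is correct and follows essentially the same route as the paper, whose entire proof is the observation that Proposition~\ref{Wfg} yields the central extension $1\to W\to L\to P\to 1$ with $W$ a finitely generated abelian group; you have simply spelled out the standard extension arguments (Bieri's $\FP_k$ results and the classical finite-presentability facts) that the paper leaves implicit.
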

\begin{proof} Proposition \ref{Wfg} gives a short exact sequence $1 \to W \to L  \to P \to 1$ and $W$ is a finitely generated abelian group.
\end{proof}

From the 1-2-3 Theorem  \cite{BBMS}, \cite{BHMS2} we get the following consequence of 
Proposition \ref{Wfg}:

\begin{cor} \label{Cor} If $G$ is finitely presented and $[G,G]$ is finitely generated, then $W(G)$ 
is finitely generated and $L(G)$ is finitely presented.
\end{cor}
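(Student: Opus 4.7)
The plan is to feed the 1-2-3 Theorem of Baumslag--Bridson--Miller--Short \cite{BBMS}, \cite{BHMS2} into Proposition \ref{Wfg} to obtain finite generation of $W(G)$, and then to bootstrap this to finite presentability of $L(G)$ using the central extension $1\to W\to L\to\rho(L)\to 1$ that was introduced in Section \ref{prelimX(G)}.

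The key observation is that $P=\langle (g,g^{-1}):g\in G\rangle$ is, via the involution $(x,y)\mapsto(x,y^{-1})$, isomorphic to the fibre product associated to the abelianisation map $\phi:G\twoheadrightarrow G^{ab}$. So I would apply the 1-2-3 Theorem to the short exact sequence
$$
1\to [G,G]\to G\to G^{ab}\to 1.
$$
The hypotheses are exactly what is given together with a triviality: the kernel $[G,G]$ is finitely generated by assumption, the middle group $G$ is finitely presented by assumption, and the quotient $G^{ab}$ is a finitely generated abelian group, hence of type $F_\infty$ and in particular $F_3$. The theorem then yields that the fibre product, and therefore $P$, is finitely presented. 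In particular $H_2(P,\Z)$ is finitely generated, so Proposition \ref{Wfg} gives at once that $W=W(G)$ is finitely generated, settling the first half of the corollary.

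For the second half, I would invoke the central extension $1\to W\to L\to \rho(L)\to 1$ from Section \ref{prelimX(G)}, noting that $\rho(L)\cong P$ and that $W$ is central in $L$ (as it is central in $DL$ by Sidki's commutator calculations). Since $W$ is a finitely generated abelian group and $\rho(L)\cong P$ is finitely presented by the previous step, the standard fact that a central extension of a finitely presented group by a finitely generated abelian group is again finitely presented (lift a finite presentation of $P$, adjoin a finite generating set for $W$ together with the finitely many relations expressing centrality and the abelian-group presentation of $W$) immediately shows that $L$ is finitely presented.

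The only genuinely non-routine ingredient is the 1-2-3 Theorem; I expect this to be the main conceptual step, since one first has to recognise $P$ as a fibre product in order to bring the theorem to bear. Everything else is bookkeeping on the structural results for $\X(G)$ already assembled in Section \ref{prelimX(G)} and Proposition \ref{Wfg}.
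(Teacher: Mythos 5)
Your argument is essentially the paper's own proof: the 1-2-3 Theorem gives finite presentability of $P=\rho(L)$, Proposition \ref{Wfg} then yields finite generation of $W$, and the central extension $1\to W\to L\to P\to 1$ with $W$ finitely generated abelian gives finite presentability of $L$. One small repair: $(x,y)\mapsto (x,y^{-1})$ is not a homomorphism of $G\times G$ when $G$ is non-abelian, so it does not identify $P$ with the symmetric fibre product; instead observe that $P$ is \emph{equal} to the fibre product of the two surjections $\phi$ and $\iota\circ\phi\colon G\to G/[G,G]$, where $\iota$ is inversion on the abelian quotient, and the asymmetric form of the 1-2-3 Theorem in \cite{BHMS2} applies with exactly the hypotheses you list.
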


\begin{proof} Under these hypotheses, the 1-2-3 Theorem  implies that 
$P=\rho(L)$ is finitely presented, hence $W$ is a finitely generated abelian group
and $L$, which fits into the exact sequence $1\to W\to L \to P\to 1$, is finitely
presented (and is of type $\FP_k$ if and only if $P$ is).
\end{proof}

\begin{remarks} 
(1) In \cite{KochSidki}  Kochloukova and Sidki showed that when the abelianization of $[G,G]$ is finitely generated then $W(G)$ is finitely generated.

(2) The converse of Proposition \ref{Wfg} is not true. For example,
if $G  = \< a,t\mid t^{-1}at=a^2\>$ then $H_2(P,\Z)$ is not
finitely generated (\cite{BBHM}, Example 2) but Kochloukova and Sidki \cite{KochSidki}
prove that $W(G)=0$. 
\end{remarks}

\section{More on the structure of $W(G)$}
Let $v_m$, as in (\ref{v-m}), be the number of relators $[w,\-w]$ required
to present $\X(F_m)$.

\begin{lemma}\label{l1} If $G$ is $m$-generator, i.e. $d(G)=m$, then $d(H_2(\X(G), \mathbb{Z}))
\le 2\, d(H_2(G, \mathbb{Z})) + \upsilon_m$.
\end{lemma}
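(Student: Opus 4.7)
The plan is to apply the five-term exact sequence in integral homology to the short exact sequence
$$1 \to N \to \X(F_m) \to \X(G) \to 1$$
produced by Theorem \ref{t:present}, where $G = F_m/R$ and $N$ is the normal closure in $\X(F_m)$ of $R\cup \-R$. This yields the exact sequence
$$H_2(\X(F_m), \Z) \to H_2(\X(G), \Z) \to K \to 0, \qquad K := \ker\!\bigl(N/[\X(F_m), N] \to \X(F_m)^{\mathrm{ab}}\bigr),$$
so the problem reduces to proving $d(H_2(\X(F_m), \Z)) \le v_m$ and $d(K) \le 2\, d(H_2(G, \Z))$, after which $d(H_2(\X(G), \Z))\le d(H_2(\X(F_m),\Z)) + d(K)$ finishes the proof.

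For the first bound I would use Hopf's formula in combination with the presentation of $\X(F_m)$ from Proposition \ref{p:G=F}: writing $\X(F_m) = F_{2m}/S_0$ with $S_0$ normally generated by the $v_m$ commutators $[w, \-w]$, each of these lies in $[F_{2m}, F_{2m}]$, so Hopf's formula identifies $H_2(\X(F_m), \Z)$ with $S_0/[F_{2m}, S_0]$; this abelian group is generated by the images of the $v_m$ normal generators, whence $d(H_2(\X(F_m), \Z)) \le v_m$.

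For the second bound, the key move is to exploit that $N$ is normally generated in $\X(F_m)$ by $R\cup\-R$ in order to build a surjection
$$\phi : R/[F_m, R] \oplus \-R/[\-{F_m}, \-R] \twoheadrightarrow N/[\X(F_m), N],$$
well-defined because $[F_m, R] \subset [\X(F_m), N]$ and similarly for the barred copy. The composition of $\phi$ with $N/[\X(F_m), N] \to \X(F_m)^{\mathrm{ab}} = F_m^{\mathrm{ab}} \oplus \-{F_m}^{\mathrm{ab}}$ splits as the direct sum of the natural maps $R/[F_m, R] \to F_m^{\mathrm{ab}}$ and $\-R/[\-{F_m}, \-R] \to \-{F_m}^{\mathrm{ab}}$; by the five-term exact sequence applied to $1 \to R \to F_m \to G \to 1$ (together with $H_2(F_m, \Z) = 0$), the kernel of each of these is $H_2(G, \Z)$. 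Surjectivity of $\phi$ therefore yields a surjection $H_2(G, \Z)^{\oplus 2} \twoheadrightarrow K$, and hence $d(K) \le 2\, d(H_2(G, \Z))$, as required.

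I expect the main obstacle to be the bookkeeping in the third paragraph: one must confirm that $N$ really is the normal closure in $\X(F_m)$ (rather than merely in $F_m * \-{F_m}$) of $R \cup \-R$, so that $N/[\X(F_m), N]$ is generated as an abelian group by the images of $R\cup\-R$, and then chase carefully through the commutative diagram relating the two five-term sequences to verify that $\phi^{-1}(K) = H_2(G, \Z) \oplus H_2(G, \Z)$. Once this alignment is established, the application of Hopf's formula and the standard fact that subgroups of a $d$-generated abelian group are $d$-generated make the remainder routine.
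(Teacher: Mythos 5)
Your argument is correct, but it runs the five-term sequence along a different extension than the paper does. You factor the surjection onto $\X(G)$ through $\X(F_m)$, i.e.\ you use $1\to N\to \X(F_m)\to\X(G)\to 1$ with $N$ normally generated by $R\cup\-R$; you then need Hopf's formula to see that $H_2(\X(F_m),\Z)$ is $\upsilon_m$-generated (valid, since the $\upsilon_m$ relators $[w,\-w]$ lie in $[F_{2m},F_{2m}]$, so $H_2(\X(F_m),\Z)=S_0/[F_{2m},S_0]$), and a separate diagram chase identifying $K$ as a quotient of $H_2(G,\Z)^{\oplus 2}$ via the surjection $\phi$ and the two copies of the five-term sequence for $1\to R\to F_m\to G\to 1$. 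That chase does go through: $\phi$ is well defined and onto because $N/[\X(F_m),N]$ is generated by the images of $R\cup\-R$ with trivial conjugation action, and the composite to $\X(F_m)^{\mathrm{ab}}\cong F_m^{\mathrm{ab}}\oplus\-{F_m}^{\mathrm{ab}}$ does split as the direct sum of the natural maps, whose kernels are each $H_2(G,\Z)$ by Hopf. The paper instead uses the ``transposed'' extension $1\to N\to G\ast\-G\to\X(G)\to 1$, where $N$ is the normal closure of the $\upsilon_m$ commutators $[w,\-w]$ with $w\in\Upsilon_m$ (this finite normal generation being exactly what Theorem~\ref{t:present} supplies). There the two bounds fall out of a single five-term sequence with no further work: $H_2(G\ast\-G,\Z)\cong H_2(G,\Z)^{\oplus 2}$ contributes the $2\,d(H_2(G,\Z))$ term, and the coinvariants $H_0(\X(G),H_1(N,\Z))$ contribute the $\upsilon_m$ term, since a subgroup of a $\upsilon_m$-generated abelian group is $\upsilon_m$-generated. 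Both routes yield the same bound; the paper's is shorter because the finitely many generators sit in the kernel rather than in the kernel of the abelianised comparison map, so no analogue of your $\phi$ is needed. Your version is a legitimate alternative and has the minor virtue of not needing to know that subgroups of finitely generated abelian groups are finitely generated with no more generators, at the cost of the extra bookkeeping you correctly identify in your final paragraph.
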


\begin{proof} Let $N$ be the kernel of the natural map $G\ast\-G\to\X(G)$; it is the
normal closure of $\upsilon_m$ commutators and
therefore $H_0(\X(G), H_1(N, \mathbb{Z})) $ requires at most $\upsilon_m$ generators.
From the 5-term exact sequence in homology associated to  $G\ast\-G\to\X(G)$ we get
an exact sequence
$$
H_2(G\ast\-G, \mathbb{Z}) \to H_2(\X(G), \mathbb{Z})\to H_0(\X(G), H_1(N, \mathbb{Z})) \to H_1(G\ast\-G, \mathbb{Z}).
$$
And $H_2(G\ast\-G, \mathbb{Z})  \cong H_2(G, \mathbb{Z}) \oplus H_2(G, \mathbb{Z})$.
\end{proof}

\begin{proposition} Let $G$ be a perfect $m$-generator group. If
$d(H_2(G),\Z))>\upsilon_m$, then $W(G)$ is non-trivial.
\end{proposition}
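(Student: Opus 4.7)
The plan is to argue by contradiction, assuming $W(G)=1$ and then comparing two estimates for the minimal number of generators of $H_2(\X(G),\Z)$.

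First I would observe that, since $G$ is perfect, $[G,G]=G$ and so the defining condition $g_1g_2^{-1}g_3\in[G,G]$ on the image $Q_G$ of $\rho:\X(G)\to G\times G\times G$ is automatically satisfied. Hence $Q_G=G\times G\times G$, and the exact sequence
\[
1\to W(G)\to \X(G)\xrightarrow{\rho} G\times G\times G\to 1
\]
recorded in Section~\ref{prelimX(G)} shows that if $W(G)=1$ then $\rho$ is an isomorphism $\X(G)\cong G\times G\times G$.

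Next I would compute $H_2$ of the triple direct product via the K\"unneth formula. Since $G$ is perfect, $H_1(G,\Z)=0$ and all cross terms $H_1\otimes H_1$ and the Tor summands vanish, leaving
\[
H_2(G\times G\times G,\Z)\cong H_2(G,\Z)^{3}.
\]
In particular $d(H_2(\X(G),\Z))=3\,d(H_2(G,\Z))$ under the assumption $W(G)=1$.

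On the other hand, Lemma~\ref{l1} gives the upper bound
\[
d(H_2(\X(G),\Z))\le 2\,d(H_2(G,\Z))+\upsilon_m
\]
with no assumption on $W(G)$. Combining the two estimates yields $3\,d(H_2(G,\Z))\le 2\,d(H_2(G,\Z))+\upsilon_m$, that is, $d(H_2(G,\Z))\le \upsilon_m$, contradicting the hypothesis $d(H_2(G,\Z))>\upsilon_m$. Hence $W(G)\neq 1$. There is no serious obstacle here; the only thing that needs care is checking that $H_1(G,\Z)=0$ collapses the K\"unneth formula exactly as claimed, so that the factor $3$ really does exceed the factor $2$ coming from Lemma~\ref{l1}.
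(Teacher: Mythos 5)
Your proof is correct and follows essentially the same route as the paper: both arguments contradict the upper bound $d(H_2(\X(G),\Z))\le 2\,d(H_2(G,\Z))+\upsilon_m$ of Lemma~\ref{l1} by producing the lower bound $3\,d(H_2(G,\Z))$ from $H_2(G\times G\times G,\Z)\cong H_2(G,\Z)^3$. The only (immaterial) difference is that the paper reaches that lower bound via the 5-term exact sequence for $1\to W\to\X(G)\to\rho(G)\to 1$, thereby showing the slightly stronger statement that the coinvariants $H_0(\X(G),W)$ are non-trivial, whereas you use the isomorphism $\X(G)\cong G\times G\times G$ that holds under the assumption $W=1$.
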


\begin{proof} As $G$ is perfect, $\rho(G) = G\times G\times G$ and $H_2(\rho( G),\Z)
\cong H_2(G,\Z)\times H_2(G,\Z)\times H_2(G,\Z)$.
From the 5-term exact sequence for $1\to W\to \X(G)\to \rho(G)\to 1$ we get an
exact sequence
$$
H_2(\X(G),\Z) \to H_2(\rho(G),\Z)\to H_0(\X(G), W),
$$
so if $H_0(\X(G),W)=0$ then $d(H_2(\X(G),\Z))\ge d(H_2(\rho(G),\Z)) =
3 \, d(H_2(G),\Z)$,
contradicting Lemma \ref{l1}.
\end{proof}

\begin{question} For $F$ a finitely generated
free group, what is $W(F)$? Does $\X(F)$ have finite cohomological dimension?
\end{question}

We shall resolve this question in a future paper \cite{BK3}. In particular we prove
that if the rank of $F$ is at least $3$, then $W(F)$ is not finitely generated.

\end{document}